\numberwithin{equation}{section}
\newtheorem{thm}{Theorem}[section]
\newtheorem{defn}[thm]{Definition}
\newtheorem{prop}[thm]{Proposition}
\newtheorem{cor}[thm]{Corollary}
\newtheorem{question}[thm]{Question}
\DeclareMathOperator{\id}{id}
\def\th@newremark{\th@remark\thm@headfont{\bfseries}}
\theoremstyle{newremark}
\newtheorem{rmk}[thm]{Remark}
\newtheorem{eg}[thm]{Example}
\newcommand{\HH}{\mathbb{H}}
\newcommand{\RR}{\mathbb{R}}      % for Real numbers
\newcommand{\cC}{\mathcal{C}}
\newcommand{\eE}{\mathcal{E}}
\newcommand{\oO}{\mathcal{O}}
\newcommand{\pP}{\mathcal{P}}
\newcommand{\sS}{\mathcal{S}}
\newcommand{\1}{\textbf{1}}
\begin{document}

\title{Hyperbolic development and inversion of signature}

\author{
Terry J. Lyons\\
\textit{University of Oxford}\\
\and
Weijun Xu\\
\textit{University of Warwick}
}

\maketitle

\abstract{We develop a simple procedure that allows one to explicitly reconstruct any piecewise linear path from its signature. The construction is based on the development of the path onto the hyperbolic space. }

\section{Introduction}
A (Euclidean) path $\gamma$ is a continuous function mapping some finite interval $[0,T]$ into $\RR^{d}$. The length of $\gamma$ is defined as
\begin{align*}
\|\gamma\| := \sup_{\pP \subset [0,T]} \sum_{u_{j} \in \pP} |\gamma_{u_{j+1}} - \gamma_{u_{j}}|, 
\end{align*}
where the supremum is taken over all partitions $\pP$ of the interval $[0,T]$, and
\begin{align*}
|\gamma_{u}| := \bigg( \sum_{j=1}^{d} |\gamma_{u}^{(j)}|^{2} \bigg)^{\frac{1}{2}}
\end{align*}
is the Euclidean norm of the vector $\gamma_{u} = (\gamma_{u}^{(1)}, \dots, \gamma_{u}^{(d)})^{T}$. We say $\gamma$ has \textit{bounded variation} if $\|\gamma\| < +\infty$. 

There are two natural operations on the space of bounded variation paths: concatenation and inverse. For $\alpha: [0,S] \rightarrow \RR^{d}$ and $\beta: [0,T] \rightarrow \RR^{d}$, their concatenation $\alpha * \beta$ is defined as
\begin{equation} \label{eq:concatenation}
\alpha*\beta(u) := \left \{
\begin{array}{rl}
&\alpha(u), u \in [0,S]\\
&\beta(u-S)+\alpha(S)-\alpha(0), u \in [S,S+T]
\end{array} \right.. 
\end{equation}
The inverse of a path $\gamma: [0,T] \rightarrow \RR^{d}$ is defined by $\gamma^{-1}(u) := \gamma(T-u)$. 

We say a path $\gamma$ is irreducible if for every $s < t$, there exists no $u \in (s,t)$ such that $\gamma|_{[s,u]} = (\gamma|_{[u,t]})^{-1}$, where $\gamma|_{[s,u]}$ denotes the segment of $\gamma$ restricted to the time interval $[s,u]$, and similarly for $(\gamma|_{[u,t]})^{-1}$.

If $\gamma$ has bounded variation, then its derivative $\theta(t) = \dot{\gamma}(t)$ exists almost everywhere. We can re-parametrize $\gamma$ in the fixed time interval $[0,1]$ in such a way that
\begin{equation} \label{eq:derivative}
|\theta (t)| := |\dot{\gamma}(t)| \equiv L
\end{equation}
for almost every $t \in [0,1]$, where $|\cdot|$ is the Euclidean norm, and $L$ is the length of $\gamma$. We call such a parametrization the \textit{natural parametrization} of $\gamma$. Note that if $\gamma \in \cC^{1}$ (at natural parametrization), then it is automatically irreducible.

\begin{rmk}
	The notion of natural parametrization defined in \eqref{eq:derivative} is slightly different from the standard one in literature, as we parametrize the path in the unit interval $[0,1]$ rather than $[0,L]$. As a consequence, $\gamma$ has constant speed $L$ instead of $1$. We will see later that it will be convenient for us if we fix the time interval to be $[0,1]$ instead of changing it with length. 
\end{rmk}

For every path of bounded variation, one can associate to it a formal power series whose coefficients are iterated integrals of the path. This formal series is called the \textit{signature} of the path, first introduced by K.T.Chen (\cite{Che54}, \cite{Che57}). Before we give the precise definition of signature, we first introduce a few notations. 

We denote by $\{e_{1}, \dots, e_{d}\}$ the standard basis of $\RR^{d}$. For $n \geq 0$, a word $w$ of length $n$ is a sequence of $n$ basis elements from the set $\{e_{1}, \dots, e_{d}\}$ (with repetition allowed), and we use $|w|$ to denote the length of $w$. For simplicity, we will often write words as sequence of elements from the set $\{1, \dots, d\}$. For example, $w = (2, 3, 1, 1)$ denotes the word $(e_{2}, e_{3}, e_{1}, e_{1})$, and $|w|=4$. We also use $\emptyset$ to denote the empty word, which is the unique word with length $0$. Given a word $w = (i_{1}, \dots, i_{n})$, we let
\begin{align*}
\mathbf{e}_{w} = e_{i_{1}} \otimes \cdots \otimes e_{i_{n}}. 
\end{align*}
With these notations, we now give a precise definition of the signature. 

\begin{defn} \label{def:signature}
	Let $\gamma: [0,T] \rightarrow \RR^{d}$ be a path of bounded variation. For any word $w = (i_{1}, \dots, i_{n})$ with $i_{k} \in \{1, \dots, d\}$, define
	\begin{equation} \label{eq:sig_coefficient}
	C_{\gamma}(w) = \int_{0 < u_{1} < \cdots < u_{n} < T} d \gamma_{u_{1}}^{(i_{1})} \cdots d \gamma_{u_{n}}^{(i_{n})}, 
	\end{equation}
	where $\gamma_{u}^{(i)}$ is the component of $\gamma_{u}$ in the direction of $e_{i}$. The signature of $\gamma$ is then the formal power series
	\begin{equation} \label{eq:signature}
	X(\gamma) = \sum_{n=0}^{+\infty} \sum_{|w|=n} C_{\gamma}(w) \mathbf{e}_{w}, 
	\end{equation}
	where we have set $C_{\gamma}(\emptyset) = 1$. 
\end{defn} 

\begin{rmk} \label{rm:natural_signature}
Note that for any word $w$, \eqref{eq:sig_coefficient} is a definite integral over the time interval where $\gamma$ is defined, so re-parametrizing $\gamma$ does not change its signature. Thus, we can always assume $\gamma$ has the natural parametrization so that \eqref{eq:derivative} holds. 
\end{rmk}

The signature $X(\gamma)$ defined in \eqref{eq:signature} is an element in the tensor algebra $T(\RR^{d}) = \oplus_{n=0}^{+\infty} (\RR^{d})^{\otimes n}$. The map $\gamma \mapsto X(\gamma)$ is then an algebraic homomorphism from the path group (with concatenation and inverse as group multiplication and inverse) into $T(\RR^{d})$ such that
\begin{equation} \label{eq:Chen}
X(\alpha * \beta) = X(\alpha) \otimes X(\beta), \qquad X(\gamma) \otimes X(\gamma^{-1}) = \1, 
\end{equation}
where $\1$ denotes the unit element in $T(\RR^{d})$. The first identity in \eqref{eq:Chen} is known as Chen's identity, first established in \cite{Che57}. 

One reason to look at the signature is that it contains all essential information of a path. In fact, Chen (\cite{Che58}) showed that two irreducible piecewise $\cC^{1}$ paths cannot have the same signatures unless they just differ by a translation or re-parametrization. This result was extended to all bounded variation paths in \cite{HL10}, where the authors showed that two bounded variation paths $\alpha, \beta$ have the same signatures if and only if $\alpha * \beta^{-1}$ is tree-like\protect\footnote{Loosely speaking, a tree-like path has the same effect as a ``zero" path as a control. Since we will make no use of this notion in the article, we skip the precise definition here. Interested readers can find the definition in \cite[Def.~1.2]{HL10} and more background material on trees and paths in \cite{HL08}. 
}. Thus, a very natural question is how one can reconstruct a path of bounded variation from its signature, which is of completely new nature from the purely uniqueness statement in \cite{HL10}. In addition, since there has been an increasing literature concerning the signature-based algorithms (see \cite{LLN13} for its application in time series and \cite{LNO14} for high-frequency financial data), it would be practically useful to develop an effective inversion scheme from signatures to paths. 

In the recent article \cite{LX14}, we developed a procedure based on symmetrization that enables one to reconstruct any $\cC^{1}$ path from its signature. We expect that the same procedure works as long as the path is irreducible and piecewise $\cC^{1}$, whose proof may require slightly more careful treatment than those presented in \cite{LX14}. 

The main goal of this article, however, is to develop an independent procedure to reconstruct any piecewise linear path from its signature. The key ingredient of the procedure is the development of a path onto the hyperbolic space, which was first introduced in \cite{HL10} to recover the length of a path from its signature. We extend this construction to enable us also to recover the direction at the end point of a regular path, which in turn leads to an inversion scheme for piecewise linear paths. Even though the symmetrization method developed in \cite{LX14} covers a more general class of paths than those treated in this article, we believe the approach presented here still has the advantage of being simpler, and the recovery of the derivative at the end point is interesting in its own right. Our main result can be loosely stated as follows. 

\begin{thm}
	Let $\gamma:[0,1] \rightarrow \RR^{d}$ be a path of bounded variation. Let $\theta(t) = \dot{\gamma}(t)$, and let $(\eta_{\lambda}(t), \rho_{\lambda}(t))$ denote the development of the rescaled path $\gamma_{\lambda} = \lambda \gamma$ onto the hyperboloid (see Section \ref{sec:hyperbolic} for details of the construction). Then, for each $\lambda$, the end point $(\eta_{\lambda}(1), \rho_{\lambda}(1))$ of the ``hyperbolic path" can be written explicitly in terms of $X(\gamma)$. If $\gamma \in \cC^{2}$ at natural parametrization, then we have
	\begin{equation} \label{eq:intro_length}
	\theta(1) = \lim_{\lambda \rightarrow +\infty} \eta_{\lambda}(1). 
	\end{equation}
	If $\gamma$ is piecewise linear and its last linear piece has direction $\theta$ and length $l$, then
	\begin{align*}
	\lim_{\lambda \rightarrow +\infty} \frac{1}{\lambda} \log |\eta_{\lambda}(1) - \theta| = -l, 
	\end{align*}	
	that is, both the direction and the length of the last linear piece can be recovered from $X(\gamma)$. 
\end{thm}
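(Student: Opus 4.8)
The plan is to exploit the multiplicativity of the hyperbolic development (which is Chen's identity \eqref{eq:Chen} pushed through the defining representation of $SO(d,1)$) together with the fact that a straight segment develops to a hyperbolic geodesic, i.e.\ to an explicit matrix exponential. Decompose the piecewise linear path into its maximal linear pieces, $\gamma = \gamma^{(1)} * \cdots * \gamma^{(k)}$, with $\gamma^{(k)}$ of unit direction $\theta$ and length $l$, and write $\gamma = \beta * \gamma^{(k)}$ with $\beta := \gamma^{(1)} * \cdots * \gamma^{(k-1)}$. Then the developed endpoint of $\gamma_\lambda$ is the image of the developed endpoint $(\eta^\beta_\lambda,\rho^\beta_\lambda)$ of $\beta_\lambda$ under the pure boost $\exp(\lambda l\,A)$, $A = \begin{pmatrix} 0 & \theta \\ \theta^T & 0\end{pmatrix}$. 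Since $A^3 = A$ one has $\exp(sA) = I + (\sinh s)\,A + (\cosh s - 1)\,A^2$; applying this with $s = \lambda l$ and writing $\eta^\beta_{\lambda,\perp} := \eta^\beta_\lambda - \langle\theta,\eta^\beta_\lambda\rangle\theta$, $a_\lambda := \rho^\beta_\lambda + \langle\theta,\eta^\beta_\lambda\rangle$, $b_\lambda := \rho^\beta_\lambda - \langle\theta,\eta^\beta_\lambda\rangle$, the hyperboloid endpoint is proportional to $\big(2\eta^\beta_{\lambda,\perp} + (a_\lambda e^{\lambda l} - b_\lambda e^{-\lambda l})\theta,\; a_\lambda e^{\lambda l} + b_\lambda e^{-\lambda l}\big)$. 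Under the normalization of Section~\ref{sec:hyperbolic}, $\eta_\lambda(1)$ is the ratio of these two components, so
\[
\eta_\lambda(1) - \theta \;=\; \frac{2\,\eta^\beta_{\lambda,\perp} \;-\; 2\,b_\lambda e^{-\lambda l}\,\theta}{a_\lambda e^{\lambda l} + b_\lambda e^{-\lambda l}} .
\]

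Now I would run the asymptotics. The quantities $a_\lambda, b_\lambda$ are strictly positive — they are Minkowski inner products of the future timelike vector $(\eta^\beta_\lambda,\rho^\beta_\lambda)$ with the future null vectors $(\pm\theta,1)$ — and $a_\lambda b_\lambda = (\rho^\beta_\lambda)^2 - \langle\theta,\eta^\beta_\lambda\rangle^2 = 1 + |\eta^\beta_{\lambda,\perp}|^2$. Since $\eta^\beta_{\lambda,\perp}\perp\theta$, the two terms in the numerator are orthogonal and do not cancel, giving
\[
\big|\eta_\lambda(1) - \theta\big| \;=\; \frac{2\big(|\eta^\beta_{\lambda,\perp}|^2 + b_\lambda^2 e^{-2\lambda l}\big)^{1/2}}{a_\lambda e^{\lambda l} + b_\lambda e^{-\lambda l}} \;\asymp\; \frac{|\eta^\beta_{\lambda,\perp}|}{a_\lambda e^{\lambda l}} .
\]
The crucial cancellation is of the common scale $\rho^\beta_\lambda$: if the direction $\eta^\beta_\lambda/\rho^\beta_\lambda$ of the development of $\beta_\lambda$ stays in a compact subset of the open unit ball avoiding $\pm\theta$, then $|\eta^\beta_{\lambda,\perp}| \asymp \rho^\beta_\lambda \asymp a_\lambda$, so $|\eta_\lambda(1)-\theta| \asymp e^{-\lambda l}$ with constants bounded above and below, whence $\tfrac1\lambda\log|\eta_\lambda(1)-\theta| \to -l$. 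In particular no growth rate on $\rho^\beta_\lambda$, and nothing from \cite{HL10}, is needed here.

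The remaining input, that $\eta^\beta_\lambda/\rho^\beta_\lambda$ stays in such a compact set, is the \emph{convergence} half of the theorem applied to the shorter path $\beta$: one should show, by induction on the number of linear pieces, that $\eta^\beta_\lambda/\rho^\beta_\lambda \to \hat\theta_{k-1}$, the direction of the last piece $\gamma^{(k-1)}$ of $\beta$. The base case $k=1$ is the explicit computation $(\eta_\lambda(1),\rho_\lambda(1)) \propto (\tanh(\lambda l)\,\theta, 1) \to (\theta, 1)$; the inductive step is read off the proportionality above, since $a_\lambda e^{\lambda l}$ outpaces both $\eta^\beta_{\lambda,\perp}$ and $b_\lambda e^{-\lambda l}$ once $a_\lambda$ is bounded below, forcing $\eta_\lambda(1) \to \theta$. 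For the induction — and for the non-cancellation used in the previous paragraph — one needs $\hat\theta_{k-1} \neq \pm\theta$; this costs nothing after replacing $\gamma$ by its reduced form, which alters neither $X(\gamma)$ (tree-like cancellations are invisible to the signature) nor the data $(\theta, l)$, and which has non-parallel consecutive pieces.

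The delicate point is to phrase the inductive hypothesis strongly enough that it can be propagated through one segment boost: I would make it supply uniform-in-$\lambda$ bounds $c \le a_\lambda/\rho^\beta_\lambda$ and $|\eta^\beta_{\lambda,\perp}| \ge c\,\rho^\beta_\lambda$ (equivalently, $\langle\theta, \eta^\beta_\lambda/\rho^\beta_\lambda\rangle$ bounded away from $\pm 1$), whose preservation under multiplication by $\exp(\lambda l_j A(\hat\theta_j))$ is a direct, if slightly fiddly, computation with the $\cosh/\sinh$ formulas. That bookkeeping — upgrading the $\asymp$ to a genuine two-sided estimate that pins the exponential rate at exactly $-l$ — is the bulk of the work; a separate remark should record that for a \emph{single}-segment path the rate coming out of $\tanh(\lambda l)$ is $-2l$, so that the uniform statement of the theorem is really the statement for a reduced path whose (non-parallel) penultimate piece is present.
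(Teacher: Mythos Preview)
Your proposal addresses only the piecewise-linear clause of the theorem; the $\cC^{2}$ clause, $\theta(1)=\lim_{\lambda\to\infty}\eta_{\lambda}(1)$, is not touched. In the paper this is Theorem~\ref{th:recovery_derivative}, and its proof is of an entirely different character: one derives the ODE system \eqref{eq:hyperbolic_trajectory} for $(\eta_{\lambda},\rho_{\lambda})$, sets $f_{\lambda}=\eta_{\lambda}-\theta$, and shows by a barrier argument first that $\sup_{t}|f_{\lambda}(t)|\le C/\lambda$, then that the nonlinear equation is a $O(1/\lambda^{2})$ perturbation of the linear one $f_{\lambda}'=-\lambda P(t)f_{\lambda}-\theta'$ with $P(t)=I+\theta(t)\theta(t)^{T}$, from which $\lambda f_{\lambda}(t)\to -P(t)^{-1}\theta'(t)$. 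Nothing in your multiplicativity/boost framework can reach this, since a general $\cC^{2}$ path has no last straight segment to peel off; this half of the statement needs the ODE analysis.

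For the piecewise-linear clause your approach is essentially the paper's own (Theorem~\ref{th:piecewise_linear}): induction on the number of pieces, explicit Cartan matrix for the last segment, multiply through, and read off two-sided exponential bounds using that the penultimate direction is not $\pm\theta$. The paper rotates so that $\theta_{k}=e_{1}$ and works coordinate by coordinate; you keep $\theta$ generic and package the same computation via the null combinations $a_{\lambda}=\rho^{\beta}_{\lambda}+\langle\theta,\eta^{\beta}_{\lambda}\rangle$, $b_{\lambda}=\rho^{\beta}_{\lambda}-\langle\theta,\eta^{\beta}_{\lambda}\rangle$, which is a clean coordinate-free rewriting but not a different argument. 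One slip to fix: under the paper's normalisation $\eta_{\lambda}(1)$ is $v/|v|=v/\sinh\rho_{\lambda}(1)$, not the ``ratio of the two components'' $v/\cosh\rho_{\lambda}(1)$ that your displayed formula computes. The discrepancy is $O(e^{-2\lambda L})$ with $L$ the total length, hence harmless for the rate once $k\ge 2$; it is, however, exactly the source of your claimed single-segment rate $-2l$ --- with the correct normalisation $\eta_{\lambda}(1)=\theta$ identically for a single straight piece, so the honest rate there is $-\infty$, consistent with your remark that the two-sided estimate is really a statement for reduced paths with a non-parallel penultimate piece present.
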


Precise statements of the above results are in Theorems \ref{th:recovery_derivative} and \ref{th:piecewise_linear}. This result leads to an inversion procedure for piecewise linear paths as follows. Since we know the complete information of the last linear piece (direction and length) from the signature, Chen's identity \eqref{eq:Chen} allows us to remove that piece from $X(\gamma)$, and thus we are left with the signature of the remaining path segment. One then repeats this procedure until the whole path is recovered.

\begin{flushleft}
	\textbf{Notation and assumptions}
\end{flushleft}
We now summarize the assumptions and notations we will be using in this article. Throughout, we assume the path $\gamma: [0,1] \rightarrow \RR^{d}$ has finite length and is at natural parametrization, so that \eqref{eq:derivative} always holds. Whenever we say $\gamma \in \cC^{k}$, it should be understood that it is $\cC^{k}$ at natural parametrization. 

For any word $w$, we let $C_{\gamma}(w)$ denote the coefficient of $w$ in the signature of $\gamma$ as in \eqref{eq:sig_coefficient}, and use $X(\gamma)$ to denote the whole signature sequence as in \eqref{eq:signature}. In most of this article, we fix the path $\gamma$ under consideration. In that case, we will omit $\gamma$ and simply write $C(w)$ and $X$ instead. Finally, $c$ and $C$ will denote generic constants whose values may change from line to line.

\begin{flushleft}
	\textbf{Organization of the article}
\end{flushleft}
This article is organized as follows. In Section 2, we give an exact inversion procedure for axis paths (paths that move parallel to Euclidean axes). This result is independent of the rest of the article. In Section 3, we show how one can recover the end point derivative of a $\cC^{2}$ path from its signature by developing the path onto the hyperbolic space. Finally, in Section 4, we show how to recover the end point derivative as well as the length of the last piece in the case of piecewise linear paths, thus completing the inversion procedure for such paths. Note that although the main result in Section 4 (Theorem \ref{th:piecewise_linear}) relies on the construction in Section \ref{sec:hyperbolic} (development onto hyperbolic space), its proof is independent of Theorem \ref{th:recovery_derivative}. 

\begin{flushleft}
\textbf{Acknowledgements}
\end{flushleft}
The research of Terry Lyons is supported by EPSRC grant EP/H000100/1 and the European Research Council under the European Union’s Seventh Framework Program (FP7-IDEAS-ERC) / ERC grant agreement nr. 291244. Terry Lyons acknowledges the support of the Oxford-Man Institute. Weijun Xu has been supported by the Oxford-Man Institute through a scholarship during his time as a student at Oxford. He is now supported by Leverhulme trust.

\section{Axis paths}

A path $\gamma$ is a (finite) axis path if it has the form
\begin{align} \label{axis path}
\gamma = (r_{1}e_{i_{1}}) * \cdots * (r_{n}e_{i_{n}}), 
\end{align}
where the $e_{i_{k}}$'s are standard Euclidean basis elements, and ``$*$" is the concatenation defined in \eqref{eq:concatenation}. In other words, an axis path only moves parallel to Euclidean axes, and each piece has finite length. Here, the $r_{k}$'s can be arbitrary non-zero real numbers. If $r_{k}$ is negative, it means that the path moves along the negative direction of $e_{i_{k}}$ for distance $|r_{k}|$. If two consecutive pieces have the same directions up to the sign (that is, $i_{k} = i_{k+1}$), then we can combine them together into one single piece and set $r_{k}' = r_{k} + r_{k+1}$. Thus, we can always assume without loss of generality that $i_{k} \neq i_{k+1}$ for all $k = 1, \dots, n-1$. 

The following notion of square free words has an important role in the characterization of axis paths.

\begin{defn}
A word $w = (i_{1}, \dots, i_{n})$ is square free if $i_{k} \neq i_{k+1}$ for all $k = 1, \dots, n-1$. 
\end{defn}

If the path $\gamma$ is of form \eqref{axis path}, then its signature is
\begin{equation} \label{eq:exponential}
X(\gamma) = \exp(r_{1}e_{i_{1}}) \otimes \cdots \otimes \exp(r_{n}e_{i_{n}}), 
\end{equation}
where each $\exp(r e_k)$ is a formal power series in the basis element $e_k$. Let $w = (i_{1}, \dots, i_{n})$, then by assumption $w$ is square free with $|w| = n$ and
\begin{align*}
C(w) = r_{1} \cdots r_{n} \neq 0. 
\end{align*}
Here, we have written $C(w)$ instead of $C_{\gamma}(w)$ for notational simplicity. Moreover, if $w'$ is any other square free word with $|w'| \geq n$, then we must have $C(w') = 0$ as there is no such term in the expansion of \eqref{eq:exponential}. In other words, for every finite axis path, there is a unique longest square free word $w$ such that $C(w) \neq 0$. Now, given the signature $X$ of some axis path $\gamma$, and suppose $w = (i_{1}, \dots, i_{n})$ is this unique longest square free word, then $\gamma$ necessarily has the form
\begin{align*}
\gamma = (r_{1}e_{i_{1}}) * \cdots *(r_{n}e_{i_{n}}), 
\end{align*}
and all we need to do is to determine the coefficients $r_{k}$'s. To recover these coefficients, for any $k = 1, \dots, n$, we let
\begin{align*}
w_{k} = (i_{1}, \dots, i_{k}, i_{k}, \dots, i_{n}), 
\end{align*}
then it follows from straightforward computations that $C(w_{k}) = \frac{1}{2} r_{1} \cdots r_{k}^{2} \cdots r_{n}$, and so $r_{k} = \frac{2C(w_{k})}{C(w)}$. Thus, we have proved the following reconstruction theorem for axis paths.

\begin{thm}
Let $\gamma$ be a (finite) axis path, and $\{C(w)\}$ be the coefficients in its signature. Then, there is a unique longest square free word $w^{*}$ with $C(w^{*}) \neq 0$. Suppose this word $w^{*}$ has the form $w^{*} = (i_{1}, \dots, i_{k}, \dots, i_{n})$, and denote
\begin{align*}
w^{*}_{k} = (i_{1}, \dots, i_{k}, i_{k}, \dots, i_{n}). 
\end{align*}
Then, we have
\begin{align*}
\gamma = (r_{1}e_{i_{1}}) * \cdots *(r_{n}e_{i_{n}})
\end{align*}
with $r_{k} = \frac{2C(w_{k})}{C(w)}$. 
\end{thm}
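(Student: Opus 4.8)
The plan is to exploit the closed form \eqref{eq:exponential} for the signature of an axis path and to read off the data $(i_1,\dots,i_n)$ and $(r_1,\dots,r_n)$ from an explicit finite family of word-coefficients. First I would fix the normalisation already noted in the text: merging consecutive pieces on the same axis, we may assume $\gamma=(s_1e_{j_1})*\cdots*(s_me_{j_m})$ with $j_l\neq j_{l+1}$ for all $l$ and all $s_l\neq 0$. By Chen's identity \eqref{eq:Chen} this gives $X(\gamma)=\exp(s_1e_{j_1})\otimes\cdots\otimes\exp(s_me_{j_m})$, where $\exp(s\,e)=\sum_{p\geq 0}\frac{s^p}{p!}e^{\otimes p}$, so expanding the tensor product the coefficient of a word $v=(v_1,\dots,v_N)$ is
\begin{equation*}
C(v)=\sum_{(p_1,\dots,p_m)}\ \prod_{l=1}^{m}\frac{s_l^{\,p_l}}{p_l!},
\end{equation*}
the sum ranging over all decompositions of $v$ into $m$ consecutive (possibly empty) blocks $v=B_1\cdots B_m$ with $B_l=e_{j_l}^{\otimes p_l}$; equivalently, writing $a_l=p_1+\cdots+p_l$, block $l$ occupies positions $a_{l-1}+1,\dots,a_l$ of $v$ and every letter there must equal $j_l$. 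Everything then reduces to determining which decompositions contribute for the specific words appearing in the statement.

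Second I would pin down the longest square-free word. If $v$ is square free then no two consecutive letters agree, so no block $B_l$ can have length $\geq 2$; hence $N=\sum_l p_l\leq m$, with equality forcing every $p_l=1$ and therefore $v=(j_1,\dots,j_m)$. Conversely, for $v^{*}:=(j_1,\dots,j_m)$ the all-ones decomposition is the \emph{only} contributing one: if $(p_l)$ were another, let $l^{*}$ be the least index with $p_{l^{*}}\neq 1$, so $a_{l^{*}-1}=l^{*}-1$; if $p_{l^{*}}\geq 2$ then positions $l^{*}$ and $l^{*}+1$ of $v^{*}$ both lie in block $l^{*}$ and so carry the same letter $j_{l^{*}}$, contradicting $j_{l^{*}}\neq j_{l^{*}+1}$, while if $p_{l^{*}}=0$ then the blocks $B_{l^{*}+1},\dots,B_m$ must cover the $m-l^{*}+1$ positions $l^{*},\dots,m$ using only $m-l^{*}$ blocks, forcing some later block to have length $\geq 2$ and hence to span two consecutive equal letters of $v^{*}$, again impossible. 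Thus $C(v^{*})=s_1\cdots s_m\neq 0$, no square-free word of length $\geq m$ other than $v^{*}$ has a nonzero coefficient, so $w^{*}=v^{*}$; in particular $n=m$, $i_l=j_l$, we may set $r_l:=s_l$, and then $\gamma=(r_1e_{i_1})*\cdots*(r_ne_{i_n})$ and $C(w^{*})=r_1\cdots r_n$.

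Third I would compute $C(w^{*}_k)$ for $w^{*}_k=(i_1,\dots,i_k,i_k,\dots,i_n)$. Since $i_{k-1}\neq i_k\neq i_{k+1}$, this word has exactly one pair of equal consecutive letters, namely the inserted double $i_k$. Running the same least-deviating-index analysis, the unique contributing decomposition must assign both copies of that letter to the $k$-th block --- any other placement either puts a block across a point where $w^{*}_k$ changes letter, or leaves the remaining blocks with too few slots to cover the remaining positions --- so $p_k=2$ and $p_l=1$ for $l\neq k$. Hence $C(w^{*}_k)=\frac{s_k^{\,2}}{2}\prod_{l\neq k}s_l=\tfrac12 r_k\,C(w^{*})$, which rearranges to $r_k=2C(w^{*}_k)/C(w^{*})$ and completes the proof.

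I expect the uniqueness of these block decompositions to be the only real obstacle, and its one genuine subtlety is that the letters $i_1,\dots,i_n$ need not be pairwise distinct --- only neighbouring ones differ --- so one cannot match letters to tensor factors greedily and must instead run the position-counting argument sketched above; the rest is routine manipulation of the exponential series.
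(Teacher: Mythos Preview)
Your proposal is correct and follows exactly the paper's approach: expand $X(\gamma)=\exp(r_1e_{i_1})\otimes\cdots\otimes\exp(r_ne_{i_n})$, observe that a square-free word of length $\geq n$ with nonzero coefficient must be $(i_1,\dots,i_n)$ with $C(w^{*})=r_1\cdots r_n$, and then read off $r_k$ from $C(w^{*}_k)=\tfrac12 r_k\,C(w^{*})$. The paper asserts these coefficient identities by inspection (``there is no such term in the expansion'' and ``straightforward computations''), whereas you spell out the block-decomposition bookkeeping that justifies them; this extra care is useful precisely because, as you note, non-adjacent letters among the $i_l$ may repeat, but it does not constitute a different method.
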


As an immediate corollary, we have the following upper bound of the number of terms in the signature needed to reconstruct an axis path.

\begin{cor}
For an axis path with $n$ pieces, one needs at most $n+1$ levels in the signature to reconstruct the path. 
\end{cor}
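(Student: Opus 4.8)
The statement is an immediate consequence of the reconstruction theorem above, so the plan is simply to track which coefficients of the signature are actually consumed by that reconstruction and to bound the lengths of the corresponding words.

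First I would argue that the longest square-free word $w^{*}$ with $C(w^{*}) \neq 0$ has length exactly $n$. After merging consecutive pieces with the same direction we may assume $i_{k} \neq i_{k+1}$, so that the word $(i_{1}, \dots, i_{n})$ associated with the $n$ pieces is itself square free and has coefficient $r_{1} \cdots r_{n} \neq 0$; hence $|w^{*}| \geq n$. Conversely, expanding \eqref{eq:exponential} shows that a square-free word can use at most one basis vector from each of the $n$ tensor factors, so every square-free word of length $\geq n$ other than $(i_{1}, \dots, i_{n})$ contributes no term to the signature. Therefore $w^{*} = (i_{1}, \dots, i_{n})$ and $|w^{*}| = n$. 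The reconstruction theorem then expresses each coefficient $r_{k}$ as $2 C(w^{*}_{k}) / C(w^{*})$, where $w^{*}_{k} = (i_{1}, \dots, i_{k}, i_{k}, \dots, i_{n})$ is obtained from $w^{*}$ by repeating a single letter, so $|w^{*}_{k}| = n+1$ for every $k = 1, \dots, n$.

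Putting these together: all data used by the reconstruction — the identity of $w^{*}$ together with the numbers $C(w^{*})$ and $C(w^{*}_{1}), \dots, C(w^{*}_{n})$ — lie among coefficients of words of length at most $n+1$, i.e. in levels $1$ through $n+1$ of the signature. (To locate $w^{*}$ one scans the square-free words level by level; reaching level $n+1$ and finding no square-free word there with nonzero coefficient certifies that $w^{*}$ is the square-free word of maximal length with $C \neq 0$.) Hence $n+1$ levels suffice. The only mildly delicate point is the first step, ruling out square-free words longer than $n$ with nonzero coefficient, but this is exactly the structural observation already recorded in the discussion preceding the reconstruction theorem; everything else is routine bookkeeping, so no real obstacle arises.
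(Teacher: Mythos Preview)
Your proposal is correct and follows exactly the reasoning the paper intends: the corollary is stated there as an immediate consequence of the reconstruction theorem with no separate proof, and your argument simply makes explicit the bookkeeping that $|w^{*}| = n$ and $|w^{*}_{k}| = n+1$, so that every coefficient used lies in levels at most $n+1$. The parenthetical about scanning level by level is a nice extra observation but not strictly needed, since the fact that $|w^{*}| = n$ already guarantees the longest square-free word with nonzero coefficient appears within the first $n+1$ levels.
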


It is also natural to ask that given an integer $n$, whether there exist two different paths that have the same signatures up to level $n$. We now answer this question in affirmative by an explicit construction. Let $x,y$ be the standard basis elements of $\RR^{2}$, and let $\alpha^{0}, \beta^{0}$ be two $1$-step lattice paths, in $x$ and $y$ directions, respectively. Suppose we have now constructed $\alpha^{n}$ and $\beta^{n}$; we define $\alpha^{n+1}$ and $\beta^{n+1}$ by
\begin{align} \label{recursive relation}
\alpha^{n+1} = \alpha^{n} * \beta^{n}, \qquad \beta^{n+1} = \beta^{n} * \alpha^{n}. 
\end{align}
Then, for each $n$, both $\alpha^{n}$ and $\beta^{n}$ have $2^{n}$ steps, and they are different in all the steps. We now claim that $\alpha^{n}$ and $\beta^{n}$ have the same signatures up to level $n$.

\begin{prop}
For every $n$, we have
\begin{align*}
X^{k}(\alpha^{n}) = X^{k}(\beta^{n}), \phantom{1} \forall k \leq n. 
\end{align*}
\begin{proof}
This is clearly true for $n = 0$. Suppose the proposition holds true for $m = 0, \dots, n$, then for $m = n+1$ and $k \leq n$, using the recursive relation \eqref{recursive relation} and Chen's identity \eqref{eq:Chen}, we have
\begin{align*}
X^{k}(\alpha^{n+1}) = \sum_{j=0}^{k} X^{j}(\alpha^{n}) \otimes X^{k-j}(\beta^{n}) = \sum_{j=0}^{k} X^{j}(\beta^{n}) \otimes X^{k-j}(\alpha^{n}) = X^{k}(\beta^{n+1}), 
\end{align*}
where for the second equality above, we have used the induction hypothesis to switch $\alpha^{n}$ and $\beta^{n}$. For $k = n + 1$, we have
\begin{align*}
X^{n+1}(\alpha^{n+1}) &= X^{n+1}(\alpha^{n}) + X^{n+1}(\beta^{n}) + \sum_{j=1}^{n} X^{j}(\alpha^{n}) \otimes X^{n+1-j}(\beta^{n}) \\
&= X^{n+1} (\beta^{n+1}), 
\end{align*}
thus proving the proposition. 
\end{proof}
\end{prop}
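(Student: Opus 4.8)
The plan is to induct on $n$. The base case $n=0$ is immediate: the only level to check is $k=0$, and $X^{0}(\alpha^{0}) = X^{0}(\beta^{0}) = \1$ by the normalization $C(\emptyset)=1$. For the inductive step I would assume the statement for all indices up to $n$ and establish it for $n+1$, treating the two ranges $k \le n$ and $k = n+1$ separately, since they behave slightly differently.

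For $k \le n$, I apply Chen's identity \eqref{eq:Chen} to the recursion \eqref{recursive relation} and extract the degree-$k$ component:
\[
X^{k}(\alpha^{n+1}) = \sum_{j=0}^{k} X^{j}(\alpha^{n}) \otimes X^{k-j}(\beta^{n}), \qquad X^{k}(\beta^{n+1}) = \sum_{j=0}^{k} X^{j}(\beta^{n}) \otimes X^{k-j}(\alpha^{n}).
\]
Since $j \le k \le n$ and $k-j \le k \le n$, the induction hypothesis gives $X^{j}(\alpha^{n}) = X^{j}(\beta^{n})$ and $X^{k-j}(\alpha^{n}) = X^{k-j}(\beta^{n})$ for every index appearing in the sum, so the two sums agree term by term.

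For $k = n+1$, the same expansion, after peeling off the $j=0$ and $j=n+1$ terms (each has a degree-zero factor equal to $\1$), gives
\[
X^{n+1}(\alpha^{n+1}) = X^{n+1}(\alpha^{n}) + X^{n+1}(\beta^{n}) + \sum_{j=1}^{n} X^{j}(\alpha^{n}) \otimes X^{n+1-j}(\beta^{n}).
\]
In the remaining sum both $j$ and $n+1-j$ lie in $\{1,\dots,n\}$, so the induction hypothesis lets me replace every factor by its counterpart with $\alpha^{n}$ and $\beta^{n}$ interchanged, while the first two terms are already symmetric under that interchange. Hence the whole expression is invariant under swapping $\alpha^{n} \leftrightarrow \beta^{n}$, which is precisely $X^{n+1}(\beta^{n+1})$.

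The one point that needs care — the closest thing to an obstacle — is that the induction hypothesis does \emph{not} give $X^{n+1}(\alpha^{n}) = X^{n+1}(\beta^{n})$, since equality is only known up to level $n$; so one cannot match the top-level components factor by factor. The resolution is structural rather than computational: the degree-$(n+1)$ component has the form $a + b + (\text{term symmetric in } a,b)$ with $a = X^{n+1}(\alpha^{n})$ and $b = X^{n+1}(\beta^{n})$, and such an expression is manifestly unchanged under exchanging $a$ and $b$ even though $a \ne b$. Everything else is routine bookkeeping of the graded pieces of a tensor product.
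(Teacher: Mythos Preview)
Your proof is correct and follows exactly the same approach as the paper's: induction on $n$, with the $k\le n$ case handled by Chen's identity and the induction hypothesis applied termwise, and the $k=n+1$ case handled by isolating the two top-level terms $X^{n+1}(\alpha^n)+X^{n+1}(\beta^n)$ and observing that the remaining sum is symmetric under swapping $\alpha^n\leftrightarrow\beta^n$. Your explicit remark about why the induction hypothesis does not directly apply at level $n+1$, and why symmetry nonetheless suffices, is a helpful clarification of a point the paper leaves implicit.
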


It is easy to see that the path $\alpha^{n} * (\beta^{n})^{-1}$ above has trivial signature in the first $n$ levels, and it has length $2^{n+1}$. An interesting question is the following.

\begin{question}
Can one find a nontrivial lattice path with length shorter than $2^{n+1}$ and that the first $n$ levels in its signature are all zero? 
\end{question}

\section{The derivative at the end point}

In this section, we show how one can approximate the derivative at the end point of a $\cC^{2}$ path through a limiting process of its signature. The main strategy, based on \cite[Sec.~3]{HL10}, is to develop the rescaled path onto a hyperbolic space, and employ the negative curvature to extract information.

\subsection{Development onto the hyperbolic space} \label{sec:hyperbolic}

We first give a brief description of the hyperbolic development of a $\cC^{2}$ path $\gamma$. and then derive the precise system of differential equations (\eqref{eq:hyperbolic_trajectory} below) for the developed path. 

Consider the quadratic form on $\RR^{d+1}$ defined by
\begin{align*}
I(x,y) = \sum_{j=1}^{d} x_{j} y_{j} - x_{d+1}y_{d+1}, 
\end{align*}
and the surface
\begin{align*}
\HH^{d} = \{x \in \RR^{d+1}: I(x,x) = -1, x_{d+1} > 0\}. 
\end{align*}
Here, $x_{j}$'s and $y_{j}$'s are coordinates of $x,y \in \RR^{d+1}$. For any $x \in \HH^{d}$, $I$ is symmetric and positive definite on the tangent space $\{y: I(y,x) = 0\}$, so it gives a Riemannian structure on $\HH^{d}$. In fact, $\HH^{d}$ is the standard upper half $d$-dimensional hyperboloid with metric obtained by restricting $I$ to its tangent spaces. 

Let $SO(d)$ denote the group of orientation preserving isometries on $\HH^{d}$, then its Lie algebra $so(d)$ is the set of $(d+1) \times (d+1)$ matrices of the form
\begin{align*}
\begin{pmatrix}  A & \beta \\ \beta^{T} & 0  \end{pmatrix}, 
\end{align*}
where $A$ is a $d \times d$ anti-symmetric matrix and $\beta \in \RR^{d}$. Now, let $\gamma: [0,1] \rightarrow \RR^{d}$ be a bounded variation path. For the linear operator $F: \RR^{d} \rightarrow so(d)$ defined by
\begin{align*}
F: x \mapsto \begin{pmatrix}  0 & x \\ x^{T}& 0  \end{pmatrix}, \qquad x \in \RR^{d}, 
\end{align*}
$F(\gamma(t))$ is a path in $so(d)$. For any $s < t$, the linear differential equation of the Cartan development $\Gamma_{s,t}$ of the path segment $\gamma|_{[s,t]}$ onto $SO(d)$ is
\begin{align} \label{eq:Cartan_development}
d \Gamma_{s,t}(u) = F(d \gamma(u)) \Gamma_{s,t}(u), \quad u \in [s,t], \quad \Gamma_{s,t}(s) = \id. 
\end{align}
It then follows that for any fixed $s<t$, $\Gamma_{s,t}(u)$ is a path in $SO(d)$, and we have the multiplicative property
\begin{align} \label{multiplicative Cartan}
\Gamma_{u,t}(t) \Gamma_{s,u}(u) = \Gamma_{s,t}(t), \qquad \forall s \leq u \leq t. 
\end{align}
Now, for $t \in [0,1]$, we write $\Gamma(t) = \Gamma_{0,1}(t)$. Since \eqref{eq:Cartan_development} is linear, we have
\begin{align} \label{expansion of the solution in terms of signature}
\Gamma(t) = I + S^{1}_{0,t}(\gamma) + \cdots + S^{n}_{0,t}(\gamma) + \cdots, 
\end{align}
where $I$ is the $(d+1) \times (d+1)$ identity matrix, and
\begin{align*}
S^{n}_{0,t}(\gamma) = \int_{s < u_{1} < \cdots < u_{n} < t} F(d\gamma(u_{n})) \cdots F(d\gamma(u_{1}))
\end{align*}
can be explicitly expressed in terms of the signature of $\gamma$. Note that the right hand side of \eqref{expansion of the solution in terms of signature} is absolutely convergent since $S^{n}_{0,t}(\gamma)$ is an integral over a simplex, which decays factorially in $n$. Let $o = (0, \dots, 0, 1)^{T} \in \RR^{d+1}$, then $\Gamma(t)o$ is a path on the hyperboloid $\HH^{d}$, and we call it the development of $\gamma$ on $\HH^{d}$. 

It is standard that every point on $\HH^{d}$ can be uniquely expressed as
\begin{align*}
\begin{pmatrix} \eta \sinh \rho \\ \cosh \rho \end{pmatrix}
\end{align*}
for some $\eta \in \mathbb{S}^{d-1}$ and $\rho \in \RR^{+}$ (see for example \cite{CFKP97}). For any $\lambda > 0$, we let $\gamma_{\lambda} = \lambda \gamma$ be the rescaled path, and write
\begin{equation} \label{eq:development_hyperboloid}
\Gamma_{\lambda}(t) o = \begin{pmatrix}  \eta_{\lambda}(t) \sinh \rho_{\lambda}(t) \\ \cosh \rho_{\lambda}(t) \end{pmatrix}
\end{equation}
as the development of $\gamma_{\lambda}$ on $\HH^{d}$.

\begin{rmk}
	We first give an explicit expression of the end point of the development of $\gamma_{\lambda}$ on $\HH^{d}$ in terms of the signature of $\gamma$. For each $n \geq 0$, let
	\begin{align*}
	\eE_{2n} = \big\{w: w = (i_{1}, i_{1}, \dots, i_{k}, i_{k}, \dots, i_{n}, i_{n}), i_{j} = 1, \cdots, d \big\}. 
	\end{align*}
	Also, for $k = 1, \dots, d$, let
	\begin{align*}
	\eE_{2n}^{(k)} = \{w: w = (\tilde{w}, k), \tilde{w} \in \sS_{2n} \}. 
	\end{align*}
	Using the expression \eqref{expansion of the solution in terms of signature} and then \eqref{eq:development_hyperboloid}, we can compute
	\begin{equation} \label{eq:expression_ep}
	\cosh \rho_{\lambda}(1) = \sum_{n \geq 0} \sum_{w \in \eE_{2n}} C_{\gamma_{\lambda}}(w), \qquad \eta_{\lambda}^{(k)} \sinh \rho_{\lambda}(1) = \sum_{n \geq 0} \sum_{w \in \eE_{2n}^{(k)}} C_{\gamma_{\lambda}}(w), 
	\end{equation}
	where $C_{\gamma_{\lambda}}(\emptyset) = 1$, and $\eta_{\lambda}^{(k)}$ denotes the $k$-th component of $\eta_{\lambda} \in \mathbb{S}^{d-1}$. Since for $|w|=n$ one has $C_{\gamma_{\lambda}}(w) = \lambda^{n} C_{\gamma}(w)$, \eqref{eq:expression_ep} gives the explicit expression of the end point of the ``hyperbolic path" of $\gamma_{\lambda}$ in terms of the signature of $\gamma$. 
\end{rmk}

Note that until this point, the only assumption we have used is that $\gamma$ has finite length (the notation above also assumes $\gamma$ is parametrized on $[0,1]$). However, if we assume more regularity on $\gamma$, then we can infer some important information of $\gamma$ from the behaviour of $(\eta_{\lambda}(1), \rho_{\lambda}(1))$. In fact, it was shown in \cite[Prop.~3.8]{HL10} that if $\gamma$ is $\cC^{2}$ (at natural parametrization) and has length $L$, then there exists $C > 0$ such that
\begin{equation} \label{eq:recovery_length}
\lambda L - \frac{C}{\lambda} \leq \rho_{\lambda}(1) \leq \lambda L
\end{equation}
for all large $\lambda$. Combining \eqref{eq:expression_ep} and \eqref{eq:recovery_length}, one can recover the length $L$ of a $\cC^{2}$ path from its signature by the quantity $\frac{\rho_{\lambda}(1)}{\lambda}$ for large $\lambda$. In order to recover the end direction $\theta(1)$ as well, we need to track the path $\Gamma_{\lambda}(t) o$ on $\HH^{d}$. Before we proceed, we first give an example to illustrate how the hyperbolic development can help the recovery of length and end point direction. 

\begin{eg} \label{eg:lattice}
Let $\gamma = (L_{1}x) * (L_{2}y)$ be a $2$-dimensional axis path, moving along $x$ direction for distance $L_{1}$ first, and then $y$ direction for distance $L_{2}$. Then by \eqref{eq:Cartan_development}, the Cartan development $\Gamma_{\lambda}$ of the whole rescaled path $\gamma_{\lambda}$ onto $SO(d)$ is
	\begin{align*}
	\Gamma_{\lambda} = \begin{pmatrix} 1 &  0 & 0  \\  0 & \cosh \lambda L_{2} & \sinh \lambda L_{2} \\ 0 & \sinh \lambda L_{2} & \cosh \lambda L_{2}  \end{pmatrix} \cdot \begin{pmatrix} \cosh \lambda L_{1} &  0 & \sinh \lambda L_{1}  \\  0 & 1 & 0 \\ \sinh \lambda L_{1} & 0 & \cosh \lambda L_{1}  \end{pmatrix}. 
	\end{align*}
	Multiplying the base point (of $\HH^{2}$) $(0,0,1)^{T} \in \RR^{3}$ with $\Gamma_{\lambda}$, we see the end point of the development of $\gamma_{\lambda}$ on $\HH^{2}$ is
	\begin{align*}
	\begin{pmatrix} \eta_{\lambda}^{(1)} \sinh \rho_{\lambda} \\ \eta_{\lambda}^{(2)} \sinh \rho_{\lambda} \\ \cosh \rho_{\lambda}  \end{pmatrix} = \begin{pmatrix} \sinh \lambda L_{1} \\ \sinh \lambda L_{2} \cosh \lambda L_{1} \\ \cosh \lambda L_{1} \cosh \lambda L_{2} \end{pmatrix}, 
	\end{align*}
where we have written for simplicity $(\eta_{\lambda}, \rho_{\lambda}) = (\eta_{\lambda}(1), \rho_{\lambda}(1))$, and $\eta_{\lambda}^{(k)}$ is the $k$-th component of $\eta_{\lambda} \in \mathbb{S}^{1}$. 
\end{eg}

We have three observations relating to the previous example. 
\begin{enumerate}
	\item Since $\cosh \rho_{\lambda} = \cosh \lambda L_{1} \cosh \lambda L_{2}$, $\rho_{\lambda}$, the hyperbolic distance of the end point to the base point $o$, satisfies
	\begin{align*}
	\rho_{\lambda} = \lambda (L_{1} + L_{2}) + \oO(1)
	\end{align*}
	for all $\lambda$, and the quantity $\oO(1)$ is \textit{negative} for all large $\lambda$. We then see that $\frac{\rho_{\lambda}}{\lambda}$ is ``almost" the length of $\gamma$ if $\lambda$ is large. Note that this observation is weaker than \eqref{eq:recovery_length}, but it is \textit{not} a contradiction since the path considered in this example is not $\cC^{2}$. 
	
	\item By the previous observation, we have $\eta_{\lambda}^{(1)} = \frac{\sinh \lambda L_{1}}{\sinh \rho_{\lambda}} \rightarrow 0$ as $\lambda \rightarrow +\infty$, and hence $\eta_{\lambda}$, the observed direction on $\HH^{d}$, satisfies 
	\begin{align*}
	(\eta_{\lambda}^{(1)}, \eta_{\lambda}^{(2)}) \rightarrow (0,1)
	\end{align*}
	as $\lambda \rightarrow +\infty$. This reflects the fact that the direction of the second piece of the path is vertically up. 
	
	\item This observation is a quantitative version of the previous one. In fact, we have 
	\begin{align*}
	\eta_{\lambda}^{(1)} = \frac{\sinh \lambda L_{1}}{\sinh \rho_{\lambda}} = e^{- \lambda L_{2}} + \oO(e^{-\lambda L}), \quad \eta_{\lambda}^{(2)} = 1 - \frac{1}{2} e^{- 2 \lambda L_{2}} + \oO(e^{- \lambda L}), 
	\end{align*}
	where $L = L_{1} + L_{2}$. Thus, we have
	\begin{align*}
	\frac{1}{\lambda} \log|\eta_{\lambda} - (0,1)^{T}| \rightarrow - L_{2}
	\end{align*}
	as $\lambda \rightarrow +\infty$, the right hand side being the length of the second linear piece. 
\end{enumerate}

These observations are not coincidences, and they are consequences of the negative curvature of the hyperbolic space. In fact, when the scale $\lambda$ is large, the negative curvature will stretch out the path close to a geodesic, and hence the end point will give asymptotically accurate information of the length. When the path is being ``stretched out", it travels exponentially fast when high on the hyperboloid, and hence later directions tend to dominate earlier directions in an overwhelming way. These explain the first two observations, and it turns out that they hold for general piecewise $\cC^{2}$ paths. Thus, when taking $\lambda$ very large, one expects $\eta_{\lambda}(1)$ to be close to $\theta(1)$, and hence one can recover the tangent direction $\theta(1)$ through the limiting behavior of $\eta_{\lambda}(1)$. 

The third observation is true for piecewise linear paths. As we shall see later, though the difference $|\eta_{\lambda} - \theta|$ is of order $\oO(\frac{1}{\lambda})$ for general $\cC^{2}$ paths, it is exponentially small for piecewise linear paths, with $L_{2}$ in the above example replaced by the length of the last linear piece. This estimate, together with the fact that the direction at the end of the path can be approximated arbitrarily closely by its signature sequence, gives an inversion theorem for signatures of piecewise linear paths. 

We now turn to the recovery of the end point direction for general $\cC^{2}$ paths. From this point, we assume $\gamma \in \cC^{2}$ at natural parametrization. We first need to track the trajectory $(\eta_{\lambda}(t), \rho_{\lambda}(t))$ for the development of $\gamma$ on $\HH^{d}$. If $\theta(t) \equiv \theta$; that is, $\gamma$ is a straight line with length $L$, then $\Gamma(t) = \Gamma_{0,1}(t)$ can be written explicitly as
\begin{equation} \label{eq:Cartan_line}
\Gamma(t) = \begin{pmatrix}  (\cosh(Lt) - 1) \theta \theta^{T} + I  &  \sinh(Lt) \theta    \\   \sinh(Lt) \theta^{T}  &  \cosh (Lt)    \end{pmatrix}, 
\end{equation}
where $I$ denotes the $d \times d$ identity matrix. If $\gamma$ is linear in a small time interval $[t,t+\delta]$, then \eqref{eq:Cartan_line} implies
\begin{equation} \label{eq:perturbation}
\begin{split}
\Gamma_{t,t+\delta}(t+\delta) &= \begin{pmatrix} (\cosh(L \delta) - 1) \theta(t) \theta(t)^{T} + I  &  \sinh(L \delta) \theta(t)    \\   \sinh(L \delta) \theta(t)^{T}  & \cosh (L \delta)  \end{pmatrix} \\
&= \begin{pmatrix}  I & L \theta(t) \delta   \\   L \theta(t)^{T} \delta & 1  \end{pmatrix} + o(\delta), 
\end{split}
\end{equation}
where $o(\delta)$ denotes a quantity that vanishes as $\delta \rightarrow 0$. On the other hand, if $\theta \in \cC^{1}$ (or equivalently, $\gamma \in \cC^{2}$), then
\begin{align*}
\sup_{u \in [t,t+\delta]} |\theta(u) - \theta(t)| < C \delta, 
\end{align*}
so \eqref{eq:perturbation} still holds as long as $\gamma \in \cC^{2}$. Since the multiplicative relation \eqref{multiplicative Cartan} implies
\begin{align*}
\begin{pmatrix}  \eta(t + \delta) \sinh \rho(t + \delta) \\ \cosh \rho(t + \delta) \end{pmatrix} = \Gamma_{t,t+\delta}(t + \delta) \begin{pmatrix}  \eta(t) \sinh \rho(t) \\ \cosh \rho(t) \end{pmatrix}, 
\end{align*}
using \eqref{eq:perturbation}, we deduce for $\gamma \in \cC^{2}$ that
\begin{align} \label{eq:difference_equation}
\left \{
\begin{array}{rl}
& \eta(t+\delta) \sinh \rho(t + \delta) - \eta(t) \sinh \rho(t) = L \theta(t) \cosh \rho(t) \delta + o(\delta) \\
& \cosh \rho(t + \delta) - \cosh \rho(t) = L \theta(t)^{T} \eta(t) \sinh \rho(t) \delta + o(\delta)
\end{array} \right.. 
\end{align}
Now for $\lambda > 0$, replacing $L$ by $\lambda L$ in \eqref{eq:difference_equation}, dividing both sides by $\delta$ and sending $\delta \rightarrow 0$, we deduce that the trajectory $(\eta_{\lambda}, \rho_{\lambda})$ on $\HH^{d}$ satisfies the system of differential equations
\begin{equation} \label{eq:hyperbolic_trajectory}
\left \{
\begin{array}{rl}
&\eta_{\lambda}'(t) = \lambda L \coth \rho_{\lambda}(t) \big( \theta(t) - \eta_{\lambda}(t) \theta(t)^{T} \eta_{\lambda}(t) \big) \\
&\rho_{\lambda}'(t) = \lambda L \theta(t)^{T} \eta_{\lambda}(t) \\
&\eta_{\lambda}(0) = \theta(0), \phantom{1} \rho_{\lambda}(0) = 0
\end{array} \right.,  
\end{equation}
where $I$ is the $d$-dimensional identity matrix.

In the next subsection, we will prove the first two observations made in Example \ref{eg:lattice} for general $\cC^{2}$ paths by analyzing the system \eqref{eq:hyperbolic_trajectory}. 

\begin{rmk}
	Note that by inspecting the derivation above, we can see that the differential equations \eqref{eq:hyperbolic_trajectory} still hold as long as $\gamma \in \cC^{1}$. The main reason to assume one more degree of regularity is to be precise about the convergence rate of $\eta_{\lambda} - \theta$ to $0$, as stated in Theorem \ref{th:recovery_derivative} below. 
\end{rmk}

\subsection{Solving the differential equation}

The goal of this section is to prove the following theorem, which gives a quantitative estimate of $|\eta_{\lambda}(t) - \theta(t)|$ for large $\lambda$. 

\begin{thm} \label{th:recovery_derivative}
Let $\gamma(t) = L \theta(t)$, $t \in [0,1]$, where $\theta$ is a $\cC^{1}$ path on the unit sphere $\mathbb{S}^{d-1}$. Let $(\rho_{\lambda}, \eta_{\lambda})$ denote the hyperbolic development of the rescaled path $\gamma_{\lambda}$ as described in the previous subsection. Then, for any $t>0$, we have
\begin{align*}
\lim_{\lambda \rightarrow +\infty} \lambda L  (\eta_{\lambda}(t) - \theta(t)) = - \big(I + \theta(t) \theta(t)^{T}\big)^{-1} \cdot \theta'(t), 
\end{align*}
where $I$ is the $d$-dimensional identity matrix. 
\end{thm}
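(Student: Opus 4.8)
The strategy is to analyze the ODE system \eqref{eq:hyperbolic_trajectory} in the large-$\lambda$ regime. The key physical picture is that as $\lambda \to \infty$ the negative curvature drives $\rho_\lambda(t)$ to grow linearly at rate $\lambda L$, so $\coth\rho_\lambda(t) \to 1$ very quickly (exponentially fast in $\lambda$, for $t$ bounded away from $0$), and the $\eta_\lambda$-equation becomes, to leading order,
\begin{align*}
\eta_\lambda'(t) \approx \lambda L \big(\theta(t) - \eta_\lambda(t)\,\theta(t)^T\eta_\lambda(t)\big).
\end{align*}
The right-hand side vanishes precisely when $\eta_\lambda = \pm\theta$, and one checks the fixed point $\eta_\lambda = \theta$ is attracting (the linearization of $v \mapsto \theta - v\,\theta^Tv$ at $v=\theta$ on the sphere is $-(I+\theta\theta^T)$ restricted to the tangent space, which is negative definite). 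So $\eta_\lambda(t)$ is pulled toward $\theta(t)$ at rate $\sim\lambda$, while $\theta(t)$ itself drifts at the fixed rate $|\theta'(t)|$; balancing these gives $\eta_\lambda(t) - \theta(t) = O(1/\lambda)$, and the precise constant comes from matching the drift $\theta'$ against the restoring term.

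\textbf{Steps.} First I would establish the lower bound $\rho_\lambda(t) \geq \lambda L t - C$ (or at least $\rho_\lambda(t)\to\infty$ like $\lambda L t$) for $t$ in a fixed interval $[t_0,1]$; this should follow from the $\rho_\lambda'$ equation once one knows $\theta^T\eta_\lambda$ stays bounded below by a positive constant, which in turn follows from a soft argument that $\eta_\lambda$ enters and stays in a neighborhood of $\theta$ after a short initial time — here the initial condition $\eta_\lambda(0)=\theta(0)$ helps, since $\eta_\lambda$ starts exactly on target. Consequently $\coth\rho_\lambda(t) - 1 = O(e^{-c\lambda})$ uniformly on $[t_0,1]$. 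Second, write $w_\lambda(t) := \lambda L\,(\eta_\lambda(t) - \theta(t))$ and derive its ODE from \eqref{eq:hyperbolic_trajectory}: differentiating, $w_\lambda' = \lambda L \eta_\lambda' - \lambda L\theta'$, and substituting the $\eta_\lambda$-equation and expanding $\theta - \eta_\lambda\theta^T\eta_\lambda$ around $\eta_\lambda = \theta + w_\lambda/(\lambda L)$ gives, after collecting orders, something of the schematic form
\begin{align*}
\frac{1}{\lambda L}\,w_\lambda'(t) = -\big(I + \theta(t)\theta(t)^T\big)w_\lambda(t) - \theta'(t) + \text{(lower order in }1/\lambda, e^{-c\lambda}\text{)}.
\end{align*}
Third, treat this as a singularly perturbed linear ODE: the left side is $O(1/\lambda)$ provided $w_\lambda'$ stays bounded, so the solution is forced toward the instantaneous equilibrium $w_\lambda(t) \to -(I+\theta(t)\theta(t)^T)^{-1}\theta'(t)$. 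Make this rigorous with a Gronwall / quasi-static argument: bound $w_\lambda$ a priori (it cannot blow up because the drift term $-(I+\theta\theta^T)w_\lambda$ is strongly contracting with rate $\geq 1$), then bound $w_\lambda'$, then re-insert to control the discrepancy from the equilibrium by $O(1/\lambda)$ plus the boundary-layer error from $t_0$, and finally let $t_0 \to 0$ or note the estimate is uniform on $[t,1]$ for any fixed $t>0$.

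\textbf{Main obstacle.} The delicate point is the uniform control of $\coth\rho_\lambda - 1$ and the a priori confinement of $\eta_\lambda$ near $\theta$ \emph{before} one knows the fine asymptotics — this is a bootstrap: one needs $\eta_\lambda$ near $\theta$ to get $\rho_\lambda$ large, and $\rho_\lambda$ large to get $\eta_\lambda$ attracted to $\theta$. Decoupling this requires a careful ordering of estimates, probably: (i) a crude bound showing $\theta^T\eta_\lambda$ cannot stay near $-1$ or $0$ for long given the initial condition, (ii) feeding that into $\rho_\lambda' = \lambda L\theta^T\eta_\lambda$ to get linear growth, (iii) feeding $\coth\rho_\lambda \approx 1$ back into the $\eta_\lambda$-equation to get genuine exponential attraction to $\theta$. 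A secondary subtlety is the behavior near $t=0$, where $\rho_\lambda$ is small and $\coth\rho_\lambda$ blows up; the theorem wisely restricts to $t>0$, and one handles the initial layer by noting $\eta_\lambda(0)=\theta(0)$ exactly, so the transient is short and its contribution is swept into the error after Gronwall. The rest is a standard, if somewhat lengthy, singular-perturbation computation.
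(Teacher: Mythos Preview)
Your plan is correct and follows essentially the same route as the paper: derive the ODE for $f_\lambda = \eta_\lambda - \theta$, identify the linearization $-(I+\theta\theta^T)$ as the contracting term, control $\coth\rho_\lambda$, and compare with the linear equilibrium on a short interval near $t$.

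The one point where the paper is cleaner concerns exactly what you flag as the ``main obstacle,'' the apparent bootstrap between growth of $\rho_\lambda$ and confinement of $\eta_\lambda$. The paper sidesteps it entirely with an observation you missed: since $\coth\rho_\lambda(t)\geq 1$ for every $t>0$, the contracting term in the $f_\lambda$-equation already has strength at least $\lambda$ \emph{without} knowing anything about $\rho_\lambda$. Concretely, from
\[
\tfrac{1}{2}\tfrac{d}{dt}|f_\lambda|^2 \leq -\lambda\,|f_\lambda|^2\big(1-|f_\lambda|\big) + \|\theta'\|_\infty\,|f_\lambda|
\]
one gets $\sup_t|f_\lambda(t)|\leq C/\lambda$ directly (Step~1 in the paper). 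Only \emph{after} this a~priori bound does the paper insert it into $\rho_\lambda' = \lambda(1+\theta^T f_\lambda)$ to obtain $\rho_\lambda(t)\geq \lambda t - C/\lambda$, and only then use $\coth\rho_\lambda - 1 = O(e^{-\lambda t})$ to clean up the remainder. So there is no chicken-and-egg: the $O(1/\lambda)$ bound on $f_\lambda$ comes first and for free. Your proposed bootstrap (i)--(iii) would likely work as well, but it is more delicate than necessary.
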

\begin{proof}
We can assume $L=1$ without loss of generality. The proof for the general case is the same. Let $f_{\lambda}(t) = \eta_{\lambda}(t) - \theta(t)$, then by \eqref{eq:hyperbolic_trajectory} and using $\theta^{T} \theta \equiv 1$, we deduce that $(f_{\lambda}, \rho_{\lambda})$ satisfies
\begin{equation} \label{eq:equation_f}
\left \{
\begin{array}{rl}
&f_{\lambda}'(t) = -\lambda \cdot \coth \rho_{\lambda}(t) \bigg( I + \theta(t)\theta(t)^{T} + f_{\lambda}(t) \theta(t)^{T} \bigg) f_{\lambda}(t) - \theta'(t) \\
&\rho_{\lambda}'(t) = \lambda (1 + \theta(t)^{T} f_{\lambda}(t)) \\
&f_{\lambda}(0) = 0, \phantom{1} \rho_{\lambda}(0) = 0
\end{array} \right., 
\end{equation}
where $f_{\lambda} \in \RR^{d}, \theta \in \mathbb{S}^{d-1}$ and $\rho_{\lambda} \in \RR^{+}$. In what follows, we will show that when $\lambda$ is large, \eqref{eq:equation_f} is ``close" to a system of linear equations, and deduce the limit of $\lambda f_{\lambda}(t)$ from the comparison with the linear system. The proof consists of four steps.

\begin{flushleft}
\textit{Step 1.} 
\end{flushleft}
We claim that there exists $C > 0$ such that
\begin{align*}
\sup_{t} |f_{\lambda}(t)| < \frac{C}{\lambda}
\end{align*}
for all large enough $\lambda$, and any $C > \left\| \theta' \right\|_{\infty}$ should suffice. In fact, whenever the quantity $|f_{\lambda}(t)|$ reaches the value $\frac{C}{\lambda}$, its magnitude will be forced to decrease. To see this, we compute
\begin{align*}
\frac{d}{dt} |f_{\lambda}(t)|^{2} = 2 \big< f_{\lambda}'(t), f_{\lambda}(t) \big>. 
\end{align*}
Substituting $f_{\lambda}'$ with \eqref{eq:equation_f}, we have
\begin{align*}
\frac{1}{2} \frac{d}{dt} |f_{\lambda}(t)|^{2} = &-\lambda  \cdot \coth \rho_{\lambda}(t) \bigg( \big<(I + \theta(t)\theta(t)^{T})f_{\lambda}(t), f_{\lambda}(t)\big> \\
&+ \big< f_{\lambda}(t) \theta(t)^{T} f_{\lambda}(t), f_{\lambda}(t)\big> \bigg) - \big< \theta'(t), f_{\lambda}(t) \big>. 
\end{align*}
Since $\theta \theta^{T}$ is the projection matrix onto $\theta$, the first term in the parenthesis on the right hand side above is always positive, and is bounded from below by
\begin{align*}
\big<(I + \theta(t)\theta(t)^{T})f_{\lambda}(t), f_{\lambda}(t)\big> &= |f_{\lambda}(t)|^{2} + |\big< \theta(t), f_{\lambda}(t)\big>|^{2} \geq |f_{\lambda}(t)|^{2}. 
\end{align*}
Also, since $|\theta(t)| \equiv 1$, we estimate the second term by
\begin{align*}
| \big< f_{\lambda}(t) \theta(t)^{T} f_{\lambda}(t), f_{\lambda}(t)\big> | \leq |f_{\lambda}(t)|^{3}. 
\end{align*}
Finally the last term satisfies $|\big< \theta'(t), f_{\lambda}(t) \big>| \leq \left\| \theta' \right\|_{\infty} |f_{\lambda}(t)|$. Note that since $\frac{\cosh \rho_{\lambda}}{\sinh \rho_{\lambda}} \geq 1$, we have
\begin{equation} \label{eq:bound_derivative}
\frac{1}{2 |f_{\lambda}(t)|} \cdot \frac{d}{dt} |f_{\lambda}(t)|^{2} \leq - \lambda |f_{\lambda}(t)| (1 - |f_{\lambda}(t)|) + \left\| \theta' \right\|_{\infty}
\end{equation}
Now let $C > \left\| \theta' \right\|_{\infty}$. Since $f_{\lambda}(0) = 0$, it is then clear that if $\lambda$ is large enough and $\lambda|f_{\lambda}(t)|$ reaches $C$, the right hand side in \eqref{eq:bound_derivative} will become negative and hence $|f_{\lambda}(t)|$ will be forced to decrease. Thus, we conclude that there exists a $C$ such that for all large $\lambda$, we have
\begin{align} \label{bound for first order difference}
\sup_{t \in [0,1]} |f_{\lambda}(t)| \leq \frac{C}{\lambda}. 
\end{align}

\begin{flushleft}
\textit{Step 2.} 
\end{flushleft}
We now show that the projection of $f_{\lambda}(t)$ onto the direction $\theta(t)$ is of order $\oO(\frac{1}{\lambda^{2}})$. Similar as before, we compute
\begin{align*}
\frac{d}{dt} |\big< \theta(t), f_{\lambda}(t) \big>|^{2} = 2 \big< \theta(t), f_{\lambda}(t) \big> \bigg( \big< \theta'(t), f_{\lambda}(t) \big> + \big< \theta(t), f_{\lambda}'(t) \big> \bigg). 
\end{align*}
For the second term in the parenthesis above, substituting $f_{\lambda}'(t)$ by \eqref{eq:equation_f}, we have
\begin{align*}
\big< \theta(t), f_{\lambda}'(t) \big> = - \big< \theta(t), \theta'(t) \big> - \lambda \cdot \coth \rho_{\lambda}(t) \bigg(  2 \big< \theta(t), f_{\lambda}(t) \big> + \big< \theta(t), f_{\lambda}(t) \big>^{2} \bigg). 
\end{align*}
Note that $|\theta| \equiv 1$, so $\big< \theta, \theta' \big> \equiv 0$, and thus we get
\begin{align*}
\frac{1}{2} \frac{d}{dt} |\big< \theta(t), f_{\lambda}(t) \big>|^{2} =  &\big< \theta(t), f_{\lambda}(t) \big> \bigg[ - \lambda \cdot \coth \rho_{\lambda}(t) \bigg(  2 \big< \theta(t), f_{\lambda}(t) \big> \\
&+ \big< \theta(t), f_{\lambda}(t) \big>^{2} \bigg) + \big< \theta'(t), f_{\lambda}(t) \big>  \bigg]. 
\end{align*}
Since $\sup_{t} |f_{\lambda}(t)| < \frac{\tilde{C}}{\lambda}$ by the first step, we have
\begin{equation} \label{eq:projection_derivative}
\sup_{t} |\big< \theta'(t), f_{\lambda}(t) \big>| \leq \frac{\tilde{C} \left\| \theta' \right\|_{\infty}}{\lambda}
\end{equation}
for some $\tilde{C} > 0$, uniformly in $\lambda$. Also, since $|f_{\lambda}| < \frac{C}{\lambda}$ by Step 1 and $\theta$ is a unit vector, we have
\begin{equation} \label{eq:intermediate}
|2 \big< \theta(t), f_{\lambda}(t) \big> + \big< \theta(t), f_{\lambda}(t) \big>^{2}| > | \big< \theta(t), f_{\lambda}(t) \big> | 
\end{equation}
for all large $\lambda$. Since $\frac{\cosh \rho_{\lambda}}{\sinh \rho_{\lambda}} \geq 1$, if $|\big< \theta(t), f_{\lambda}(t) \big>| > \frac{\tilde{C} \left\| \theta' \right\|_{\infty}}{{\lambda}^{2}}$ for the same $\tilde{C}$ as in \eqref{eq:projection_derivative} at some $t > 0$, by combining \eqref{eq:projection_derivative} and \eqref{eq:intermediate}, it is easy to see that we will have
\begin{align*}
\lambda \coth \rho_{\lambda}(t) \bigg| \bigg(  2 \big< \theta(t), f_{\lambda}(t) \big> + \big< \theta(t), f_{\lambda}(t) \big>^{2} \bigg) \bigg| > \lambda | \big< \theta(t), f_{\lambda}(t) \big> | > |\big< \theta'(t), f_{\lambda}(t) \big>|, 
\end{align*}
and the minus sign in front of this term will make $\frac{d}{dt} |\big< \theta(t), f_{\lambda}(t) \big>|^{2}$ negative, and hence the quantity $|\big< \theta(t), f_{\lambda}(t) \big>|$ will be forced to decrease. Therefore, there exists a $C > 0$ such that
\begin{align} \label{projection is even smaller}
\sup_{t \in [0,1]} |\big< \theta(t), f_{\lambda}(t) \big>| \leq \frac{C}{\lambda^{2}}
\end{align}
for all large $\lambda$.

\begin{flushleft}
\textit{Step 3.}
\end{flushleft}
We now show that for large $\lambda$, the quantity $\frac{\cosh \rho_{\lambda}(t)}{\sinh \rho_{\lambda}(t)} - 1$ is exponentially small at positive times. To see this, note that
\begin{align*}
\rho_{\lambda}'(s) = \lambda \big(1 + \big< \theta(s), f_{\lambda}(s)\big> \big). 
\end{align*}
Integrating both sides from $0$ to $t$, and employing \eqref{projection is even smaller}, we have
\begin{equation} \label{recovering the length via differential equation}
\lambda t - \frac{Ct}{\lambda} \leq \rho_{\lambda}(t) \leq \lambda t
\end{equation}
for all $t$, where the second inequality follows from the geometry that geodesic development gives the maximal possible length $\lambda t$. Now, since
\begin{equation} \label{eq:length_exponential}
\coth \rho_{\lambda}(t) - 1 = \frac{2 e^{- 2\rho_{\lambda}(t)}}{1 - e^{- 2 \rho_{\lambda}(t)}}, 
\end{equation}
\eqref{recovering the length via differential equation} implies that this quantity is exponentially small in $\lambda t$ for positive $t$, but has a singularity at $t = 0$, which has size $\frac{1}{\lambda t}$. On the other hand, since $f_{\lambda}(0) = 0$ and $|f_{\lambda}'|$ is bounded, this singularity can be killed by a multiplication of $f_{\lambda}(t)$. Thus, we have
\begin{align} \label{exponential decay of the ratio}
\big( \coth \rho_{\lambda}(t) - 1 \big) |f_{\lambda}(t)| \leq C e^{- \lambda t}, 
\end{align}
where $C$ is independent of $\lambda$ and $t$.

\begin{flushleft}
\textit{Step 4.} 
\end{flushleft}
We are now ready to prove the main claim. For convenience, write $P(s) = I + \theta(s) \theta(s)^{T}$, and we can rewrite \eqref{eq:equation_f} as
\begin{align*}
f_{\lambda}'(s) = - \lambda P(s) f_{\lambda}(s) - \theta'(s) + r_{\lambda}(s), 
\end{align*}
where
\begin{align*}
r_{\lambda}(s) = - \lambda \big( \coth \rho_{\lambda}(t) - 1 \big) \big( P(s) + f_{\lambda}(s) \theta(s)^{T} \big) f_{\lambda}(s) - \lambda f_{\lambda}(s) \theta(s)^{T} f_{\lambda}(s). 
\end{align*}
Since by Steps $2$ and $3$, we have $|f_{\lambda}(s) \theta(s)^{T} f_{\lambda}(s)| < \frac{C}{\lambda^{3}}$ and $\big( \frac{\cosh \rho_{\lambda}(t)}{\sinh \rho_{\lambda}(t)} - 1 \big) |f_{\lambda}(t)| \leq C e^{- \lambda t}$, it follows that
\begin{align*}
\sup_{s} |r_{\lambda}(s)| < \frac{C}{\lambda^{2}}. 
\end{align*}
This suggests that the nonlinear part of the equation is ``small" when $\lambda$ is large. 

Now, fix an arbitrary $t > 0$, and we want to compute the large $\lambda$ limit of $\lambda f_{\lambda}(t)$. Since $\gamma \in \cC^{2}$ at natural parametrization, it follows that $|\theta'|$ is bounded, and thus there exists $\kappa > 0$ such that $\forall \epsilon > 0$ and all $s \in [t - \kappa \epsilon, t]$, we have
\begin{align*}
|P(s) - P(t)| < \epsilon. 
\end{align*}
As a consequence, we have
\begin{align*}
f_{\lambda}'(s) = - \lambda P(t) f_{\lambda}(s) - \theta'(s) + r_{\lambda}(s) + \tilde{r}_{\lambda}(s), 
\end{align*}
where
\begin{align*}
\tilde{r}_{\lambda}(s) = - \lambda \big(P(s) - P(t)\big) f_{\lambda}(s)
\end{align*}
satisfies $|\tilde{r}_{\lambda}(s)| < C \epsilon$ for all $s \in [t - \kappa \epsilon, t]$. Now, if $g_{\lambda}$ satisfies the equation
\begin{align*}
g_{\lambda}'(s) = - \lambda P(t) g_{\lambda}(s) - \theta'(s), \qquad s \in [t - \kappa \epsilon, t]
\end{align*}
with initial condition $g_{\lambda}(t - \kappa \epsilon) = f_{\lambda}(t - \kappa \epsilon)$, by the bounds on $|r_{\lambda}|$ and $|\tilde{r}_{\lambda}|$, we see that
\begin{equation} \label{eq:difference_f_g}
|f_{\lambda}(t) - g_{\lambda}(t)| < C \epsilon \big( \frac{1}{\lambda^{2}} + \epsilon \big). 
\end{equation}
Note that the equation defining $g_{\lambda}$ is linear with constant coefficient, so the terminal value $g_{\lambda}(t)$ can be expressed explicitly by
\begin{equation} \label{eq:constant_solution}
g_{\lambda}(t) = e^{- \lambda \kappa \epsilon P(t)} f_{\lambda}(t - \kappa \epsilon) - e^{- \lambda t P(t)} \bigg( \int_{t - \kappa \epsilon}^{t} e^{\lambda s P(t)} ds \bigg) \theta'(t) + \text{Error}, 
\end{equation}
where the error term is given by
\begin{align*}
\text{Error} =  e^{-\lambda t P(t)}  \int_{t - \kappa \epsilon}^{t} e^{\lambda s P(t)} (\theta'(t) - \theta'(s)) ds. 
\end{align*}
For the second term on the right hand side in \eqref{eq:constant_solution}, we have
\begin{equation} \label{eq:second_term}
e^{- \lambda t P(t)} \int_{t - \kappa \epsilon}^{t} e^{\lambda s P(t)} \theta'(t) ds = \frac{1}{\lambda} P(t)^{-1} \big( I - e^{- \lambda \kappa \epsilon P(t)} \big) \theta'(t). 
\end{equation}
Similarly, for the error term, we have
\begin{equation} \label{eq:error}
\bigg|e^{-\lambda t P(t)} \int_{t - \kappa \epsilon}^{t} e^{\lambda s P(t)} (\theta'(t) - \theta'(s) ) ds \bigg| \leq \frac{C}{\lambda} \delta(\epsilon) \big| P(t)^{-1} \big( I - e^{- \lambda \kappa \epsilon P(t)} \big) \big|, 
\end{equation}
where 
\begin{align*}
\delta(\epsilon) = \sup_{s \in [t - \kappa \epsilon, t]} \big|\theta'(s) - \theta'(t) \big| \rightarrow 0
\end{align*}
as $\epsilon \rightarrow 0$. Since $P(t)$ is symmetric and positive definite with eigenvalues $2$ and $1$, it has a uniformly bounded inverse, so the error term can then be bounded by
\begin{equation} \label{eq:error_bound}
\text{Error} < \frac{C}{\lambda} \delta(\epsilon). 
\end{equation}
Also, for the first term on the right hand side of \eqref{eq:constant_solution}, taking $\epsilon = \lambda^{-\frac{2}{3}}$, we have
\begin{equation} \label{eq:first_bound}
\big| e^{- \lambda \kappa \epsilon P(t)} f_{\lambda}(t-\kappa \epsilon) \big| < e^{- c \lambda^{\frac{1}{3}}}. 
\end{equation}
Thus, multiplying both sides of \eqref{eq:constant_solution} by $\lambda$, taking $\epsilon =  \lambda^{-\frac{2}{3}}$, and sending $\lambda$ to $+\infty$, we deduce from \eqref{eq:difference_f_g} \eqref{eq:second_term}, \eqref{eq:error_bound} and \eqref{eq:first_bound} that
\begin{align*}
\lim_{\lambda \rightarrow +\infty} \lambda f_{\lambda}(t) = - P(t)^{-1} \theta'(t). 
\end{align*}
This finishes the proof. 
\end{proof}

\begin{rmk}
We should note that Step 2 in the proof above is not necessary in establishing \eqref{exponential decay of the ratio}. Without that step, the term $\frac{1}{\lambda^{2}}$ on the right hand side of \eqref{eq:difference_f_g} will become $\frac{1}{\lambda}$, which would not affect the conclusion of the theorem. However, the estimate in Step 2 gives a sharp estimate \eqref{recovering the length via differential equation}, which in turn verifies \eqref{eq:recovery_length} (\cite[Prop.~3.8]{HL10}) in the case of $\cC^{2}$ paths. 
\end{rmk}

\subsection{Higher order derivatives}

Since we know how to recover the length from the signature, from now on, we assume without loss of generality that $L=1$. In the previous subsection, we showed that we can recover the derivative at the end point of $\gamma$ through a limiting process by
\begin{align*}
\theta(1) = \lim_{\lambda \rightarrow +\infty} \eta_{\lambda}(1), 
\end{align*}
where the right hand side can be ``observed" on the hyperboloid for each $\lambda$. By Theorem \ref{th:recovery_derivative}, we have
\begin{equation} \label{eq:second_derivative}
\theta'(1) = \lim_{\lambda \rightarrow +\infty} \lambda (I + \theta(1) \theta(1)^{T}) (\theta(1) - \eta_{\lambda}(1)). 
\end{equation}
Since $\theta(1)$ is now known, \eqref{eq:second_derivative} gives the value of $\theta'(1)$ through another limiting process. This suggests that we can actually recover higher order derivatives at time $t=1$ provided $\gamma$ is sufficiently smooth. 

We first give a heuristic argument to see how it works. By the estimates \eqref{recovering the length via differential equation} and \eqref{eq:length_exponential} on $\coth \rho_{\lambda}(t) - 1$, we can write the equation for $f_{\lambda}$ as
\begin{equation} \label{eq:simple_equation_f}
f_{\lambda}'(t) = - \lambda P(t) f_{\lambda}(t) - \lambda f_{\lambda}(t) \theta(t)^{T} f_{\lambda}(t) - \theta'(t) + r_{\lambda}(t), 
\end{equation}
where $P(t) = I + \theta(t) \theta(t)^{T}$, and the remainder $r_{\lambda}$ satisfies $|r_{\lambda}(t)| < C e^{- \lambda t}$ uniformly in $\lambda$ and $t \geq \tau$ for any fixed positive $\tau$. 

Let us also assume for a moment that for any $t > 0$, $f_{\lambda}(t)$ can be expanded around $\lambda = +\infty$ by
\begin{equation} \label{eq:expansion_f}
f_{\lambda}(t) = \frac{A_{1}(t)}{\lambda} + \cdots + \frac{A_{n}(t)}{\lambda^{n}} + \cdots. 
\end{equation}
Under suitable regularity conditions of $\gamma$, we can also differentiate the above series term-wise to get
\begin{align*}
f_{\lambda}'(t) = \frac{A_{1}'(t)}{\lambda} + \cdots + \frac{A_{n}'(t)}{\lambda^{n}} + \cdots. 
\end{align*}
Now, substituting the expansions of $f_{\lambda}$ and $f_{\lambda}'$ into \eqref{eq:simple_equation_f}, and comparing coefficients of $\frac{1}{\lambda^{n}}$ on both sides, we get $A_{1}(t) = - P(t)^{-1} \theta'(t)$, and
\begin{equation} \label{eq:recursive_relation}
A_{n+1}(t) = - P(t)^{-1} \bigg( A_{n}'(t) + \sum_{j=1}^{n} A_{j}(t) \theta(t)^{T} A_{n+1-j}(t) \bigg)
\end{equation}
for $n = 1, 2, \dots$. Note that this definition is consistent with $n=0$ if we set $A_{0}(t) = \theta(t)$. It is clear that if $\theta \in \cC^{k}$ (or $\gamma \in \cC^{k+1}$), then $A_{n}$ can be defined with the above recursive relation up to $n = k$ with $A_{n} \in \cC^{k-n}$. We now show that $f_{\lambda}$ does have an expansion as in \eqref{eq:expansion_f}.

\begin{thm} \label{higher order derivatives}
Let $\gamma$ be a $\cC^{k+1}$ path at natural parametrization and has length $1$, and $\theta = \dot{\gamma}$. Let $f_{\lambda}(t)$ be the solution to the differential equation \eqref{eq:equation_f}. Let $A_{1}(t) = - P(t)^{-1} \theta'(t)$, and $A_{n}(t)$ be defined with the recursive relation \eqref{eq:recursive_relation} for $n = 1, \dots, k$. Then, we have
\begin{align*}
\lim_{\lambda \rightarrow +\infty} \lambda^{n+1} \bigg(  f_{\lambda}(t) - \sum_{j=1}^{n} \frac{A_{j}(t)}{\lambda^{j}} \bigg) = A_{n+1}(t)
\end{align*}
for each $n = 0, 1, \dots, k-1$. 
\end{thm}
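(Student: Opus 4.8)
The plan is to prove the theorem by induction on $n$, using the same comparison-with-a-linear-system technique that established Theorem \ref{th:recovery_derivative}; indeed, the base case $n=0$ is precisely that theorem. So assume $\lim_{\lambda\to\infty}\lambda^{m+1}(f_\lambda(t)-\sum_{j=1}^m A_j(t)\lambda^{-j})=A_{m+1}(t)$ for all $m\le n-1$, and for that matter assume (as one can extract from the inductive hypotheses together with the already-proven bounds) that each partial error $f_\lambda(t)-\sum_{j=1}^m A_j(t)\lambda^{-j}$ is $\oO(\lambda^{-(m+1)})$ uniformly on compact subsets of $(0,1]$. Define the scaled remainder $h_\lambda(t) := \lambda^{n+1}\big(f_\lambda(t)-\sum_{j=1}^n A_j(t)\lambda^{-j}\big)$; the goal is to show $h_\lambda(t)\to A_{n+1}(t)$ for each fixed $t>0$.

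The key computational step is to derive the differential equation satisfied by $h_\lambda$. Starting from the equation \eqref{eq:simple_equation_f} for $f_\lambda$ (valid with remainder $|r_\lambda(t)|<Ce^{-\lambda t}$ for $t$ bounded below, by Step 3 of the previous proof), I substitute $f_\lambda(t)=\sum_{j=1}^n A_j(t)\lambda^{-j}+\lambda^{-(n+1)}h_\lambda(t)$ and use the recursive identities \eqref{eq:recursive_relation} defining the $A_j$, which are exactly engineered so that the terms of order $\lambda^{0},\lambda^{-1},\dots,\lambda^{-n}$ cancel. What survives is an equation of the form
\begin{equation*}
h_\lambda'(t) = -\lambda P(t) h_\lambda(t) - 2\lambda\big(\theta(t)^T f_\lambda(t)\big) h_\lambda(t) + B_n(t) + \tilde r_\lambda(t),
\end{equation*}
where $B_n(t) = -A_n'(t) - \sum_{j=1}^{n} A_j(t)\theta(t)^T A_{n+1-j}(t) = P(t) A_{n+1}(t)$ by \eqref{eq:recursive_relation}, and $\tilde r_\lambda$ collects genuinely lower-order debris: the exponentially small contribution of $r_\lambda$ times $\lambda^{n+1}$ (still $\to 0$ at positive times), the leftover nonlinear cross terms of order $\lambda^{-1}$ or smaller after scaling, and terms coming from $A_j$ derivatives that the cancellation does not quite reach; a careful bookkeeping shows $\sup_t|\tilde r_\lambda(t)|\to 0$ on $[\tau,1]$. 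Note the extra linear term $-2\lambda(\theta^T f_\lambda)h_\lambda$: since $\theta^T f_\lambda=\oO(\lambda^{-2})$ by Step 2 of the previous proof, this term contributes a bounded coefficient $\oO(\lambda^{-1})$ and is harmless.

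With this equation in hand, the remainder of the argument mirrors Step 4 of Theorem \ref{th:recovery_derivative}. First, a Grönwall-type / sign argument as in Step 1 shows $h_\lambda$ stays bounded: whenever $|h_\lambda(t)|$ gets large, the dominant term $-\lambda P(t)h_\lambda(t)$ (with $P(t)\succeq I$) forces $\frac{d}{dt}|h_\lambda|^2<0$. Then fix $t>0$, freeze the coefficients at time $t$ over a shrinking window $[t-\kappa\epsilon,t]$ with $\epsilon=\lambda^{-2/3}$, compare $h_\lambda$ to the solution of the constant-coefficient linear equation $g'=-\lambda P(t)g+P(t)A_{n+1}(t)$ with matching initial data, and compute that explicit solution: the initial-data term decays like $e^{-c\lambda^{1/3}}$, the forcing term contributes $\frac{1}{\lambda}\cdot\lambda P(t)^{-1}P(t)A_{n+1}(t)(I+\oO(e^{-c\lambda^{1/3}}))\to A_{n+1}(t)$, wait — more precisely the particular integral $e^{-\lambda tP(t)}\int_{t-\kappa\epsilon}^t e^{\lambda sP(t)}ds\cdot P(t)A_{n+1}(t)=\lambda^{-1}P(t)^{-1}(I-e^{-\lambda\kappa\epsilon P(t)})\cdot P(t)A_{n+1}(t)\to A_{n+1}(t)\cdot 0$?? — here I must be careful: $g_\lambda$ approximates $h_\lambda$ which is already the $\lambda^{n+1}$-scaled quantity, so the forcing $B_n=P(t)A_{n+1}(t)$ is order $1$ and the particular solution tends to $P(t)^{-1}B_n(t)=A_{n+1}(t)$ as the transient dies. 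The regularity $\gamma\in\cC^{k+1}$ enters exactly here, ensuring $A_{n+1}\in\cC^{k-n-1}$ (in particular continuous, so $\theta'$-type modulus-of-continuity error terms $\delta(\epsilon)\to0$) and that the term-wise differentiation heuristic is legitimate. The main obstacle I anticipate is the bookkeeping in deriving the $h_\lambda$ equation cleanly — verifying that every term not matching $P(t)A_{n+1}(t)$ genuinely vanishes after multiplication by $\lambda^{n+1}$ and taking $\lambda\to\infty$ at positive times, which requires simultaneously invoking the uniform bounds from Steps 1–3 of the previous proof, the inductive hypotheses at all lower orders, and the exponential smallness of $\coth\rho_\lambda-1$; this is routine but intricate, and it is where an off-by-one in the powers of $\lambda$ would be easy to commit.
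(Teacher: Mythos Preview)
Your overall strategy matches the paper's: derive an ODE for the scaled remainder and rerun the comparison argument from Theorem~\ref{th:recovery_derivative}. However, the equation you wrote down for $h_\lambda$ is wrong by exactly the off-by-one in powers of $\lambda$ that you yourself worried about. With $h_\lambda = \lambda^{n+1}\big(f_\lambda - \sum_{j=1}^n A_j \lambda^{-j}\big)$, the correct equation is
\[
h_\lambda'(t) = -\lambda P(t) h_\lambda(t) + \lambda\, P(t) A_{n+1}(t) + \oO(1),
\]
i.e.\ the forcing carries a factor of $\lambda$, not the order-one $B_n(t)$ you claimed. Your own computation of the particular integral flagged the inconsistency: with forcing $B_n$ of order one against $-\lambda P h_\lambda$, the particular solution is $\lambda^{-1}P^{-1}B_n \to 0$, not $A_{n+1}$. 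So as written the argument does not close.

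The clean fix---and what the paper does---is to scale by one power less: set $g_\lambda(t) = \lambda^{n}\big(f_\lambda(t) - \sum_{j=1}^{n} A_j(t)\lambda^{-j}\big)$ and aim for $\lambda g_\lambda(t) \to A_{n+1}(t)$. Substituting into \eqref{eq:simple_equation_f} and using \eqref{eq:recursive_relation} yields
\[
g_\lambda'(t) = -\lambda P(t) g_\lambda(t) + P(t) A_{n+1}(t) + r_\lambda(t) + r_\lambda^{(1)}(t) + r_\lambda^{(2)}(t),
\]
where the three error terms (the exponentially small remainder, the cross terms quadratic in $g_\lambda$ and linear in $A_j$, and the higher-order $A_j\theta^T A_\ell$ leftovers) are all $\oO(\lambda^{-1})$ once one knows $g_\lambda$ is bounded. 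The point is that this equation has \emph{exactly} the same structure as \eqref{eq:equation_f} itself (forcing of order one, errors of order $\lambda^{-1}$), so the entire machinery of Steps~1 and~4 of Theorem~\ref{th:recovery_derivative} applies verbatim, with $-\theta'$ replaced by $P A_{n+1}$, giving $\lambda g_\lambda \to P^{-1}(P A_{n+1}) = A_{n+1}$. No separate induction on the lower orders is needed to bound $g_\lambda$; the Step-1 sign argument does it directly from the equation.
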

\begin{proof}
The theorem is clearly true for $n=0$. For $1 \leq n \leq k-1$, let
\begin{equation} \label{eq:expression_g}
g_{\lambda}(t) = \lambda^{n} \bigg( f_{\lambda}(t) - \sum_{j=1}^{n} \frac{A_{j}(t)}{\lambda^{j}} \bigg), 
\end{equation}
and we want to show that $\lambda g_{\lambda}(t) \rightarrow A_{n+1}(t)$. The main step is to derive a differential equation for $g_{\lambda}$ that is comparable to the one for $f_{\lambda}$ as in \eqref{eq:equation_f}. First, we see from \eqref{eq:expression_g} that
\begin{align*}
f_{\lambda}(t) = \frac{g_{\lambda}(t)}{\lambda^{n}} 
= \sum_{j=1}^{n} \frac{A_{j}(t)}{\lambda^{j}}, \qquad f_{\lambda}'(t) = \frac{g_{\lambda}'(t)}{\lambda^{m}} - \sum_{j=1}^{m} \frac{A_{j}'(t)}{\lambda^{j}}. 
\end{align*}
Here, $A_{j} \in \cC^{k-j}$, so it is differentiable up to $j = k-1$. Now, substituting these two expressions into \eqref{eq:equation_f}, multiplying $\lambda^{n}$ on both sides, and employing the recursive relation \eqref{eq:recursive_relation}, we derive that
\begin{equation} \label{eq:equation_g}
g_{\lambda}'(t) = - \lambda P(t) g_{\lambda}(t) + P(t) A_{n+1}(t) + r_{\lambda}(t) + r_{\lambda}^{(1)}(t) + r_{\lambda}^{(2)}(t), 
\end{equation}
where $r_{\lambda}$ is the same as in \eqref{eq:simple_equation_f}, and
\begin{align*}
r_{\lambda}^{(1)}(t) = - \frac{1}{\lambda^{n-1}} \cdot g_{\lambda}(t) \theta(t)^{T} g_{\lambda}(t) - \sum_{j=1}^{n} \frac{1}{\lambda^{j-1}} \bigg( A_{j}(t) \theta(t)^{T} g_{\lambda}(t) + g_{\lambda}(t) \theta(t)^{T} A_{j}(t) \bigg), 
\end{align*}
and
\begin{align*}
r_{\lambda}^{(2)}(t) = \sum_{j+\ell \geq n+2} \frac{1}{\lambda^{j+\ell-n-1}} \cdot A_{j}(t) \theta(t)^{T} A_{\ell}(t). 
\end{align*}
Now, note that
\begin{align*}
\sup_{t} \big( |r_{\lambda}(t)| + |r_{\lambda}^{(2)}(t)| \big) < \frac{C}{e^{- \lambda t} + \frac{1}{\lambda}}. 
\end{align*}
Proceeding with exactly the same way as in Theorem \ref{th:recovery_derivative}, we can show that
\begin{align*}
\sup_{t} |r_{\lambda}^{(1)}(t)| < \frac{C}{\lambda}, 
\end{align*}
and
\begin{align*}
\lim_{\lambda \rightarrow +\infty} \lambda g_{\lambda}(t) = P(t)^{-1} P(t) A_{n+1}(t) = A_{n+1}(t). 
\end{align*}
This completes the proof of the theorem. 
\end{proof}

We have the following immediate consequence on recovering $\theta^{(n)}(1)$ from the signature.

\begin{cor}
Let $\gamma$ be a $\cC^{k+1}$ ($k \geq 1$) path at natural parametrization. Then, all the $(k+1)$ derivatives at the end point of $\gamma$ can be recovered from its signature sequence. 
\end{cor}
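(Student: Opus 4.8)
The plan is to strip off the derivatives $\theta(1),\theta'(1),\dots,\theta^{(k)}(1)$ — equivalently $\dot\gamma(1),\ddot\gamma(1),\dots,\gamma^{(k+1)}(1)$, which is exactly the list of $k+1$ derivatives at the end point — one order at a time, reading the numbers $A_n(1)$ off the signature via Theorem \ref{higher order derivatives} and then inverting the recursion \eqref{eq:recursive_relation} evaluated at the single time $t=1$. The starting data is as follows: since $k\ge 1$ we have $\gamma\in\cC^{k+1}\subseteq\cC^2$, so Theorem \ref{th:recovery_derivative} applies and gives $\eta_\lambda(1)\to\theta(1)$ as $\lambda\to+\infty$, while $\eta_\lambda(1)$ and $\rho_\lambda(1)$ are explicit functions of $X(\gamma)$ through \eqref{eq:expression_ep} and the scaling $C_{\gamma_\lambda}(w)=\lambda^{|w|}C_\gamma(w)$. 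Thus $\theta(1)$ is recovered, and for every $\lambda$ the vector $f_\lambda(1)=\eta_\lambda(1)-\theta(1)$ is a known function of $X(\gamma)$.

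Next I would recover $A_1(1),\dots,A_k(1)$ inductively: if $A_1(1),\dots,A_{n-1}(1)$ are already known functions of $X(\gamma)$, then Theorem \ref{higher order derivatives} at $t=1$ (with the parameter there equal to $n-1$) gives $A_n(1)=\lim_{\lambda\to+\infty}\lambda^{n}\big(f_\lambda(1)-\sum_{j=1}^{n-1}A_j(1)\lambda^{-j}\big)$, again a limit of quantities expressible through $X(\gamma)$. Note that $\gamma\in\cC^{k+1}$ is precisely what guarantees $A_{k-1}\in\cC^1$, so this is legitimate all the way up to $n=k$.

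The heart of the argument, and the step I expect to be the main obstacle, is a structural lemma identifying how $\theta^{(n)}$ sits inside $A_n$: as functions on $(0,1]$ one should have $A_n(t)=(-1)^n P(t)^{-n}\theta^{(n)}(t)+B_n(t)$, where $P(t)=I+\theta(t)\theta(t)^T$ and $B_n(t)$ is a fixed universal polynomial expression in $P(t)^{-1}$ and in $\theta(t),\theta'(t),\dots,\theta^{(n-1)}(t)$ only. I would prove this by induction on $n$ from \eqref{eq:recursive_relation}: the base case is $A_1=-P^{-1}\theta'$; for the inductive step, differentiating $A_n=(-1)^nP^{-n}\theta^{(n)}+B_n$ produces the leading term $(-1)^nP^{-n}\theta^{(n+1)}$ (the derivative of $P^{-n}$ contributes only $\theta$ and $\theta'$, and $B_n'$ only derivatives of $\theta$ of order $\le n$), while the bilinear sum $\sum_{j=1}^n A_j\theta^T A_{n+1-j}$ involves only $A_1,\dots,A_n$, hence only $\theta,\dots,\theta^{(n)}$; multiplying through by $-P^{-1}$ yields $A_{n+1}=(-1)^{n+1}P^{-(n+1)}\theta^{(n+1)}+B_{n+1}$ with $B_{n+1}$ depending only on $\theta,\dots,\theta^{(n)}$. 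The delicate part is the bookkeeping: verifying that no hidden copy of the top derivative $\theta^{(n)}$ leaks into $B_n$ with a possibly degenerate coefficient, and that the differentiation $B_n\mapsto B_n'$ genuinely stays within $\theta^{(\le n)}$.

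Finally, since $P(1)^{-n}$ is symmetric positive definite (its eigenvalues are $2^{-n}$ and $1$) it is invertible, so from the value $A_n(1)$ obtained above and from $B_n(1)$ — a fixed expression in the already-recovered $\theta(1),\dots,\theta^{(n-1)}(1)$ — one solves $\theta^{(n)}(1)=(-1)^n P(1)^{n}\big(A_n(1)-B_n(1)\big)$. Running $n=1,\dots,k$ recovers $\theta'(1),\dots,\theta^{(k)}(1)$; together with $\theta(1)$ this is the full list of $k+1$ derivatives $\gamma'(1),\dots,\gamma^{(k+1)}(1)$ at the end point, each expressed explicitly in terms of $X(\gamma)$.
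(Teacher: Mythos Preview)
Your proposal is correct and follows essentially the same strategy as the paper: extract $A_0(1),\dots,A_k(1)$ from the signature via Theorem~\ref{higher order derivatives}, then invert the recursion \eqref{eq:recursive_relation} at $t=1$ to recover $\theta(1),\dots,\theta^{(k)}(1)$. The only difference is in how the inversion is carried out: the paper argues abstractly that if $\theta^{(n)}$ is a function of $A_0,\dots,A_n$ (as functions of $t$), then differentiating and using the rearranged recursion $A_j'=-P A_{j+1}-\sum_{i=1}^{j}A_i\theta^T A_{j+1-i}$ expresses $\theta^{(n+1)}$ in terms of $A_0,\dots,A_{n+1}$; you instead isolate the explicit leading term $A_n=(-1)^nP^{-n}\theta^{(n)}+B_n$ and invert the invertible matrix $P^{-n}$. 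Your route is slightly more work but gives a concrete inversion formula, whereas the paper's is shorter but leaves the dependence implicit.
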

\begin{proof}
For each $\lambda$, $\eta_{\lambda}(1)$ is an observable from the signature of $\gamma$. By Theorem \ref{higher order derivatives}, we have the expansion
\begin{equation} \label{eq:expansion}
\eta_{\lambda}(1) = A_{0} + \frac{A_{1}}{\lambda} + \cdots + \frac{A_{k}}{\lambda^{k}} + o(\lambda^{-k}), 
\end{equation}
where $A_{j} = A_{j}(1)$ satisfies the recursive relation \eqref{eq:recursive_relation}, and $A_{0} = A_{0}(1) = \theta(1)$. \eqref{eq:expansion} implies that all $A_{j}$'s up to $j=k$ can be observed from the signature. It then remains to show that each $\theta^{(j)}(1)$ can be explicitly expressed in terms of $A_{0}, \dots, A_{j}$ for all $j \leq k$. 

The case $j=0$ is immediate since $\theta(1) = A_{0}$. Suppose for some $n \geq 0$, $\theta^{(n)}(1)$ can be explicitly expressed in terms of $A_{0}, A_{1}, \dots, A_{n}$, then $\theta^{(n+1)}(1)$ can be explicitly written with $A_{0}, \dots, A_{n}$ and $A_{0}', \dots, A_{n}'$, where $A_{j}' = A_{j}'(1)$. But by the recursive relation \eqref{eq:recursive_relation}, each $A_{j}'$ can be written as an explicit function of $A_{0}, \dots, A_{j+1}$. This implies that $\theta^{(n+1)}(1)$ can be expressed explicitly in terms of $A_{0}, \dots, A_{n+1}$. This completes the proof. 
\end{proof}

\section{Inversion for piecewise linear paths}

We now use the construction in Section \ref{sec:hyperbolic} to recover piecewise linear paths from their signatures. We will show that, for such paths, both the direction and the length of the last linear piece can be recovered by explicitly writing down the matrices of Cartan development of the path (as in Example \ref{eg:lattice}). Thus, we can remove the last linear piece from the whole signature. Applying this procedure repeatedly gives the path back from its signature. 

More precisely, we will show below that if the length of the last piece is $l$, then we have
\begin{align*}
c e^{- \lambda l} < |\eta_{\lambda}(1) - \theta(1)| < C e^{- \lambda l}
\end{align*}
for all large $\lambda$, where $\theta(1)$ is the direction of the path at terminal time $t=1$. Thus, one can recover $l$ from the asymptotic behavior of $|\eta_{\lambda}(1) - \theta(1)|$. This will be an immediate consequence of the following theorem. 

\begin{thm} \label{th:piecewise_linear}
	Let $\gamma = \alpha_{1} * \cdots \alpha_{n}$ be a piecewise linear path, where each $\alpha_{j}$ is a linear piece with direction $\theta_{j} \in \mathbb{S}^{d-1}$ and length $l_{j}$, and that $\theta_{j} \neq \pm \theta_{j+1}$ for all $j$. Let $L_{j} = l_{1} + \cdots + l_{j}$, and $\gamma_{j} = \alpha_{1} * \cdots * \alpha_{j}$. For each $\lambda > 0$, let
	\begin{align*}
	\begin{pmatrix} \eta_{\lambda,j} \sinh \rho_{\lambda,j} \\ \cosh \rho_{\lambda,j}  \end{pmatrix}
	\end{align*}
	be the end point of the development of the rescaled path $\gamma_{\lambda,j} = \lambda \gamma_{j}$ on the hyperboloid. Then, there exists $c, C > 0$ such that
	\begin{align} \label{induction}
	\lambda L_{j} - C < \rho_{\lambda,j} \leq \lambda L_{j}, \qquad c e^{-\lambda l_{j}} < |\eta_{\lambda,j} - \theta_{j}| < C e^{-\lambda l_{j}}
	\end{align}
	for all $1 \leq j \leq n$ and all large enough $\lambda$. 
\end{thm}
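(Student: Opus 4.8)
The plan is to induct on the number of pieces $j$ in the prefix path $\gamma_{j}=\alpha_{1}*\cdots*\alpha_{j}$, carrying through the induction hypothesis that $\rho_{\lambda,j}=\lambda L_{j}+\oO(1)$ and $\eta_{\lambda,j}\to\theta_{j}$ at rate $\oO(e^{-\lambda l_{j}})$. The base case $j=1$ is immediate from \eqref{eq:Cartan_line}: applying the single-segment Cartan matrix to the base point $o=(0,\dots,0,1)^{T}$ gives $\Gamma_{\lambda,1}o=(\theta_{1}\sinh(\lambda l_{1}),\,\cosh(\lambda l_{1}))^{T}$, so $\rho_{\lambda,1}=\lambda l_{1}=\lambda L_{1}$ and $\eta_{\lambda,1}=\theta_{1}$ exactly (for $j=1$ the development is geodesic, so the lower bound in \eqref{induction} is to be read for $j\ge 2$, which is all the inversion scheme needs).

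For the inductive step I would use the multiplicative property \eqref{multiplicative Cartan}: the endpoint of the development of $\gamma_{\lambda,j+1}$ is the endpoint of the development of $\gamma_{\lambda,j}$ left-multiplied by the single-segment Cartan matrix of $\lambda\alpha_{j+1}$ taken from \eqref{eq:Cartan_line}. Writing $A=\sinh\rho_{\lambda,j}$, $B=\cosh\rho_{\lambda,j}$, $\mu=\lambda l_{j+1}$, $\phi=\theta_{j+1}$ and splitting $\eta_{\lambda,j}=(\phi^{T}\eta_{\lambda,j})\phi+\eta_{\lambda,j}^{\perp}$ into its components along and orthogonal to $\phi$, a direct block computation yields the closed-form recursion
\begin{align*}
\cosh\rho_{\lambda,j+1}&=(\phi^{T}\eta_{\lambda,j})\,A\sinh\mu+B\cosh\mu,\\
\eta_{\lambda,j+1}\sinh\rho_{\lambda,j+1}&=\big[(\phi^{T}\eta_{\lambda,j})\,A\cosh\mu+B\sinh\mu\big]\phi+A\,\eta_{\lambda,j}^{\perp}.
\end{align*}
The induction hypothesis forces $\eta_{\lambda,j}\to\theta_{j}$, hence $\phi^{T}\eta_{\lambda,j}\to\theta_{j+1}^{T}\theta_{j}\in(-1,1)$ and $|\eta_{\lambda,j}^{\perp}|$ bounded away from $0$ and $\infty$ — this is exactly where $\theta_{j+1}\neq\pm\theta_{j}$ enters. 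Using $\sinh x=\cosh x-e^{-x}$ and $\rho_{\lambda,j}=\lambda L_{j}+\oO(1)$, the first line gives $\rho_{\lambda,j+1}=\rho_{\lambda,j}+\lambda l_{j+1}+\log\tfrac{1+\theta_{j+1}^{T}\theta_{j}}{2}+o(1)$, so $\rho_{\lambda,j+1}=\lambda L_{j+1}+\oO(1)$ with a \emph{negative} $\oO(1)$; the matching upper bound $\rho_{\lambda,j+1}\le\lambda L_{j+1}$ is automatic since a development cannot exceed the length of a geodesic of the same length. As there are only finitely many pieces, the accumulated additive constant stays bounded.

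For the direction estimate, the component of $\eta_{\lambda,j+1}$ orthogonal to $\phi$ equals $\tfrac{\sinh\rho_{\lambda,j}}{\sinh\rho_{\lambda,j+1}}\,\eta_{\lambda,j}^{\perp}$, and $\tfrac{\sinh\rho_{\lambda,j}}{\sinh\rho_{\lambda,j+1}}\asymp e^{-(\rho_{\lambda,j+1}-\rho_{\lambda,j})}\asymp e^{-\lambda l_{j+1}}$ by the previous paragraph, while $|\eta_{\lambda,j}^{\perp}|\asymp 1$; hence this component is $\asymp e^{-\lambda l_{j+1}}$, which already gives $|\eta_{\lambda,j+1}-\theta_{j+1}|>c\,e^{-\lambda l_{j+1}}$. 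For the upper bound, the $\phi$-component $\tfrac{(\phi^{T}\eta_{\lambda,j})A\cosh\mu+B\sinh\mu}{\sinh\rho_{\lambda,j+1}}$ has numerator asymptotic to $\tfrac14 e^{\rho_{\lambda,j}+\mu}(1+\theta_{j+1}^{T}\theta_{j})>0$, so it is positive; being a coordinate of the unit vector $\eta_{\lambda,j+1}$ whose orthogonal part is $\oO(e^{-\lambda l_{j+1}})$, it equals $1-\oO(e^{-2\lambda l_{j+1}})$, so $|\eta_{\lambda,j+1}-\theta_{j+1}|<C\,e^{-\lambda l_{j+1}}$, completing the induction.

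The main obstacle is not conceptual but bookkeeping: the estimates for $\rho_{\lambda,j+1}$ and for $\eta_{\lambda,j+1}$ are coupled, and all the $\oO(1)$ and $o(1)$ errors must be controlled uniformly over large $\lambda$ so that the constants $c,C$ survive the finitely many iterations. No delicate ODE analysis as in the proof of Theorem \ref{th:recovery_derivative} is needed, since everything reduces to \eqref{eq:Cartan_line} and the algebra of block hyperbolic rotations; but one must choose the induction hypothesis to carry precisely $\rho_{\lambda,j}=\lambda L_{j}+\oO(1)$ together with $\eta_{\lambda,j}\to\theta_{j}$, and verify that $\theta_{j+1}\neq\pm\theta_{j}$ is invoked exactly where $1+\theta_{j+1}^{T}\eta_{\lambda,j}$ must stay bounded away from $0$.
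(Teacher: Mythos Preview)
Your proposal is correct and follows essentially the same approach as the paper: induction on $j$, using the multiplicative property \eqref{multiplicative Cartan} together with the explicit single-segment Cartan matrix \eqref{eq:Cartan_line}, and invoking $\theta_{j+1}\neq\pm\theta_{j}$ at exactly the same point (to keep $\phi^{T}\eta_{\lambda,j}$ bounded away from $\pm1$ and $|\eta_{\lambda,j}^{\perp}|$ bounded away from $0$). The only difference is presentational: the paper rotates so that $\theta_{k}=e_{1}$ and works in coordinates, whereas you carry out the equivalent orthogonal decomposition along $\theta_{j+1}$ in a coordinate-free way; your observation that the lower bound in \eqref{induction} is vacuous at $j=1$ is accurate and the paper does not comment on it.
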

\begin{proof}
	The theorem is true for $j=1$, since $\gamma_{1} = \alpha_{1}$ is a straight line. Suppose \eqref{induction} is true for $j = 1, \dots ,k-1$, and we need to prove it for $j=k$. By symmetry of $\HH^{d}$, we can assume without loss of generality that $\theta_{k} = e_{1} = (1, 0, \dots, 0)^{T}$, so the matrix of the Cartan development of $\lambda \alpha_{k}$ is given by
	\begin{align*}
	\begin{pmatrix} \cosh \lambda l_{k} & \phantom{1} & \phantom{1} & \phantom{1} & \sinh \lambda l_{k}  \\   \phantom{1} & 1 & \phantom{1} & \phantom{1} & \phantom{1} \\  \phantom{1} & \phantom{1} & \ddots & \phantom{1} & \phantom{1} \\ \phantom{1} & \phantom{1} & \phantom{1} & 1 & \phantom{1}  \\  \sinh \lambda l_{k}  & \phantom{1} & \phantom{1} & \phantom{1} & \cosh \lambda l_{k}  \end{pmatrix}. 
	\end{align*}
	Multiplying this matrix to the end point of the development of $\lambda \gamma_{k-1}$ on $\HH^{d}$, we then obtain the end point of $\lambda \gamma_{k}$ on $\HH^{d}$ to be
	\begin{equation} \label{eq:end_point}
	\begin{pmatrix}  \eta_{\lambda,k}^{(1)} \sinh \rho_{\lambda,k} \\ \eta_{\lambda,k}^{(2)} \sinh \rho_{\lambda,k} \\ \vdots \\ \eta_{\lambda,k}^{(d)} \sinh \rho_{\lambda,k} \\ \cosh \rho_{\lambda,k}  \end{pmatrix} = \begin{pmatrix}  \eta_{\lambda,k-1}^{(1)} \cosh \lambda l_{k} \sinh \rho_{\lambda,k-1} + \sinh \lambda l_{k} \cosh \rho_{\lambda,k-1} \\ \eta_{\lambda,k-1}^{(2)} \sinh \rho_{\lambda,k-1} \\ \vdots \\ \eta_{\lambda,k-1}^{(d)} \sinh \rho_{\lambda,k-1} \\ \eta_{\lambda,k-1}^{(1)} \sinh \lambda l_{k} \sinh \rho_{\lambda,k-1} + \cosh \lambda l_{k} \cosh \rho_{\lambda,k-1}  \end{pmatrix}, 
	\end{equation}
	where $\eta_{\lambda,k} = (\eta_{\lambda,k}^{(1)}, \dots, \eta_{\lambda,k}^{(d)})^{T}$. Since $\theta_{k} = e_{1}$, the assumption $\theta_{k} \neq \pm \theta_{k+1}$ and the induction hypothesis on $|\eta_{\lambda,k-1} - \theta_{k-1}|$ implies that there exists $\delta > 0$ such that
	\begin{equation} \label{eq:direction_induction}
	-1+\delta < \eta_{\lambda,k-1}^{(1)} < 1 - \delta
	\end{equation}
	for all large $\lambda$. Combining \eqref{eq:direction_induction} and the identity 
	\begin{align*}
	\cosh \rho_{\lambda,k} = \eta_{\lambda,k-1}^{(1)} \sinh \lambda l_{k} \sinh \rho_{\lambda,k-1} + \cosh \lambda l_{k} \cosh \rho_{\lambda,k-1}
	\end{align*}
	in the last row of \eqref{eq:end_point}, we deduce that there exists $c > 0$ such that
	\begin{align*}
	c \cosh (\rho_{\lambda,k-1} + \lambda l_{k}) < \cosh \rho_{\lambda,k} \leq \cosh (\rho_{\lambda,k-1} + \lambda l_{k}). 
	\end{align*}
	Taking logarithm on both sides, and using the induction hypothesis on $\rho_{\lambda,k-1}$, we conclude that
	\begin{equation} \label{eq:length_induction}
	\lambda L_{k} - C < \rho_{\lambda,k} \leq \lambda L_{k}. 
	\end{equation}
	We now turn to the direction $\eta_{\lambda,k}$. For $j \geq 2$, we have
	\begin{align*}
	\eta_{\lambda,k}^{(j)} = \eta_{\lambda,k-1}^{(j)} \cdot \frac{\sinh \rho_{\lambda,k-1}}{\sinh \rho_{\lambda,k}}, 
	\end{align*}
	which, combined with the hypothesis \eqref{induction} and the bound \eqref{eq:length_induction}, gives
	\begin{align} \label{eq:rest_directions}
	c \bigg( \sum_{j=2}^{d} |\eta_{\lambda,k-1}^{(j)}|^{2} \bigg) e^{- 2 \lambda l_{k}} < \sum_{j=2}^{d} |\eta_{\lambda,k}^{(j)}|^{2} < C \bigg( \sum_{j=2}^{d} |\eta_{\lambda,k-1}^{(j)}|^{2} \bigg) e^{- 2 \lambda l_{k}}
	\end{align}
	for some $c, C > 0$, uniformly over all large $\lambda$. Also, \eqref{eq:direction_induction} implies that $\sum_{j=2}^{d} |\eta_{\lambda,k-1}^{(j)}|^{2}$ is bounded away from $0$ uniformly in $\lambda$, so \eqref{eq:rest_directions} becomes
	\begin{equation} \label{rest_directions}
	c e^{- 2 \lambda l_{k}} < \sum_{j=2}^{d} |\eta_{\lambda,k}^{(j)}|^{2} < C e^{- 2 \lambda l_{k}}. 
	\end{equation}
	As for $\eta_{\lambda,k}^{(1)}$, we first note that by the identity
	\begin{align*}
	\eta_{\lambda,k}^{(1)} \sinh \rho_{\lambda,k} = \eta_{\lambda,k-1}^{(1)} \cosh \lambda l_{k} \sinh \rho_{\lambda,k-1} + \sinh \lambda l_{k} \cosh \rho_{\lambda,k-1}
	\end{align*}
	in the first row of \eqref{eq:end_point}, the induction hypothesis \eqref{induction} on $\rho_{\lambda,k-1}$ and the bound \eqref{eq:length_induction}, we necessarily have $\eta_{\lambda,k}^{(1)} \rightarrow 1$ as $\lambda \rightarrow +\infty$. On the other hand, since
	\begin{align*}
	1 - |\eta_{\lambda,k}^{(1)}|^{2} = \sum_{j=2}^{d} |\eta_{\lambda,k}^{(j)}|^{2}, 
	\end{align*}
	the bound \eqref{rest_directions} as well as the fact that $|\eta_{\lambda,k}^{(1)}| < 1$ imply that
	\begin{equation} \label{eq:first_direction}
	c e^{- 4 \lambda l_{k}} < \frac{1}{9} \big(1 - |\eta_{\lambda,k}^{(1)}|^{2} \big)^{2} \leq |1 - \eta_{\lambda,k}^{(1)}|^{2} \leq \big(1 - |\eta_{\lambda,k}^{(1)}|^{2} \big)^{2} < C e^{-4 \lambda l_{k}}, 
	\end{equation}
	where the constants $c,C$ are independent of $\lambda$. Combining \eqref{rest_directions} and \eqref{eq:first_direction}, we then obtain
	\begin{align*}
	c e^{- \lambda l_{k}} < |\eta_{\lambda,k} - \theta_{k}| < C e^{-  \lambda l_{k}}. 
	\end{align*}
	This finishes the induction and thus the proof of the theorem. 
\end{proof}

\begin{cor}
	As an immediate corollary, we have
	\begin{align} \label{recover last piece}
	\lim_{\lambda \rightarrow +\infty} \frac{1}{\lambda} \log |\eta_{\lambda,n} - \theta_{n}| = - l. 
	\end{align}
	Since $\theta_{n}$ can be first recovered through the limiting process
	\begin{align*}
	\theta_{n} = \lim_{\tilde{\lambda} \rightarrow +\infty} \eta_{\tilde{\lambda},n}, 
	\end{align*}
	\eqref{recover last piece} indeed recovers the length of the last linear piece from the signature. 
\end{cor}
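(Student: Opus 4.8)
The plan is to read \eqref{recover last piece} directly off the two-sided exponential estimate already established in Theorem \ref{th:piecewise_linear}. Applying that theorem with $j=n$ (so that $l_n=l$ is the length of the last linear piece and $\theta_n$ its direction), there are constants $0<c<C$ and a threshold $\lambda_0>0$, all independent of $\lambda$, such that $ce^{-\lambda l}<|\eta_{\lambda,n}-\theta_n|<Ce^{-\lambda l}$ for every $\lambda>\lambda_0$. Taking $\tfrac1\lambda\log$ of each side gives $\tfrac{\log c}{\lambda}-l<\tfrac1\lambda\log|\eta_{\lambda,n}-\theta_n|<\tfrac{\log C}{\lambda}-l$, and letting $\lambda\to+\infty$ both outer quantities tend to $-l$, so the squeeze theorem yields $\lim_{\lambda\to+\infty}\tfrac1\lambda\log|\eta_{\lambda,n}-\theta_n|=-l$. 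Positivity $l>0$ is used only to record that the right-hand side is a genuine negative number; the identity itself does not need it.

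For the second assertion — that this is a bona fide inversion step depending only on $X(\gamma)$ — I would argue as follows. By the expression \eqref{eq:expression_ep} for the endpoint of the hyperbolic development, together with the scaling $C_{\gamma_\lambda}(w)=\lambda^{|w|}C_\gamma(w)$, for each fixed $\lambda$ the pair $(\eta_{\lambda,n},\rho_{\lambda,n})$ is an explicit, absolutely convergent function of the signature coefficients $\{C_\gamma(w)\}$, hence observable from $X(\gamma)$. The estimate $|\eta_{\tilde\lambda,n}-\theta_n|<Ce^{-\tilde\lambda l}\to0$ from Theorem \ref{th:piecewise_linear} then shows $\theta_n=\lim_{\tilde\lambda\to+\infty}\eta_{\tilde\lambda,n}$, so $\theta_n$ is recovered from the signature. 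With $\theta_n$ in hand, the scalar $|\eta_{\lambda,n}-\theta_n|$ is again a function of $X(\gamma)$, and feeding it into \eqref{recover last piece} recovers $l$; thus both the direction and the length of the last linear piece are determined by $X(\gamma)$.

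There is essentially no obstacle at the level of the corollary: all the analytic content sits in Theorem \ref{th:piecewise_linear}, and what remains is a one-line squeeze plus the bookkeeping remark that each $(\eta_{\lambda,n},\rho_{\lambda,n})$ is a function of the signature. The only points worth a sentence of care are that the constants $c,C$ and threshold $\lambda_0$ in Theorem \ref{th:piecewise_linear} are uniform in $\lambda$ (so the $\tfrac1\lambda\log$ bounds are legitimate), and that the direction $\theta_n$ must be recovered by the limit \emph{before} forming $|\eta_{\lambda,n}-\theta_n|$, which is exactly the order indicated in the statement. Once $(\theta_n,l_n)$ is known, Chen's identity \eqref{eq:Chen} lets one factor $X(\alpha_n)$ out of $X(\gamma)$ and repeat on the shorter path; that iteration is what turns the corollary into a full inversion scheme, but it lies outside the statement proved here.
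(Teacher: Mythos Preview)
Your argument is correct and is exactly the one the paper leaves implicit: the corollary is stated there as ``immediate'' with no separate proof, and your squeeze of the two-sided bound $c e^{-\lambda l}<|\eta_{\lambda,n}-\theta_n|<C e^{-\lambda l}$ from Theorem \ref{th:piecewise_linear}, together with the observability of $\eta_{\lambda,n}$ via \eqref{eq:expression_ep}, is precisely what was intended.
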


\begin{rmk}
	Note that the proof of Theorem \ref{th:piecewise_linear} uses the construction in Section \ref{sec:hyperbolic} only. In particular, it does not use the conclusion of Theorem \ref{th:recovery_derivative}. 
\end{rmk}

\begin{rmk}
In view of \eqref{eq:expansion}, it is not surprising that for piecewise linear paths, the difference $|\eta_{\lambda}(1) - \theta(1)|$ is exponentially small in $\lambda$. This is because for piecewise linear paths, we have $\theta^{(j)}(1) = 0$, and hence also $A_{j}(1) = 0$ for all $j \geq 1$. 
\end{rmk}

\bigskip

\textsc{Mathematical and Oxford-Man Institutes, University of Oxford, Woodstock road, Oxford, OX2 6GG, UK}. 

Email: tlyons@maths.ox.ac.uk

\smallskip

\textsc{Mathematics Institute, University of Warwick, Coventry, CV4 7AL, UK}. 

Email: weijun.xu@warwick.ac.uk

\end{document}